\newcommand{\R}{\mathbb{R}}
\newcommand{\Q}{\mathbb{Q}}
\newcommand{\C}{\mathbb{C}}
\newcommand{\Z}{\mathbb{Z}}
\newcommand{\End}{\text{End}}
\newcommand{\Hom}{\text{Hom}}
\newcommand{\GL}{\text{GL}}
\newcommand{\rightQ}[2]{\left.\raisebox{.2em}{$#1$}\middle/\raisebox{-.2em}{$#2$}\right.}
\newcommand{\leftQ}[2]{\left.\raisebox{-.2em}{$#2$}\middle\backslash\raisebox{.2em}{$#1$}\right.}
\newtheorem{theorem}{Theorem}[section]
\newtheorem{proposition}[theorem]{Proposition}
\newtheorem{lemma}[theorem]{Lemma}
\newtheorem{corollary}[theorem]{Corollary}
\theoremstyle{definition}
\newtheorem{definition}[theorem]{Definition}
\newtheorem{example}[theorem]{Example}
\theoremstyle{remark}
\newtheorem{remark}[theorem]{Remark}
\numberwithin{equation}{section}
\title{The monodromy of compact Lagrangian fibrations}
\author{Edward Varvak}
\address{Edward Varvak: Department of MSCS, University of Illinois Chicago, 
Chicago, USA}
\email{evarva2@uic.edu}
\begin{document}

\begin{abstract}
    We study the monodromy representations underlying compact Lagrangian fibrations. 
    In the case where the associated period map is generically immersive, we prove 
    that the mondromy representation is irreducible over \(\mathbb{C}\). In the
    alternative case where the fibration is isotrivial, we recover a result of
    \cite{kim-laza-martin23}, proving that its fibers are isogeneous to a power of
    an elliptic curve. We show that over \(\mathbb{C}\), the monodromy representation
    underlying an isotrivial Lagrangian fibration is a direct sum of two irreducible
    \(\mathbb{C}\)-local systems.
\end{abstract}

\maketitle

\section{Introduction}

The Beauville--Bogomolov decomposition theorem states that any compact K\"ahler
manifold \(X\) with \(K_X\) trivial is up to isogeny a product of complex tori,
strict Calabi--Yau manifolds, and hyperk\"ahler manifolds. In recent years, the 
latter of the three have become increasingly popular. For a survey, see for instance
\cite{deb22}.

\begin{definition}
    Let \(X\) be a compact simply-connected K\"ahler manifold. We say that \(X\) is
    hyperk\"ahler if \(H^0(X, \Omega_X^2) = \C \sigma\) for some nowhere
    degenerate holomorphic 2-form \(\sigma\).
\end{definition}

Since \(\sigma\) is symplectic, \(X\) must have even complex dimension \(2n\).
We may hope to understand the geometry of a hyperk\"ahler by expressing it as the
total space of a fibration. To that end, consider the following definition.

\begin{definition}
    Let \(f: X \to B\) be a holomorphic map from a hyperk\"ahler to a normal analytic
    variety of dimension \(0 < \dim B < 2n\).
    We say that \(f\) is a fibration if it is proper and surjective with connected 
    fibers. If for some fiber \(X_b \subseteq X\) we have \(\sigma|_{X_b} = 0\),
    we say that \(X_b\) is isotropic. If every fiber of \(f\) is Lagrangian 
    (isotropic and pure \(n\)-dimensional) we say that \(f\) is a Lagrangian fibration.
\end{definition}

While the condition of being Lagrangian may seem quite strict, it turns out to be 
forced, by \cite{mats99}. In fact, if \(f: X \to B\) is a fibration, then
\(f\) is Lagrangian and \(B\) is projective; see, for instance,
\cite[Proposition 2.2]{greb-lenh14}. Of special note 
are the smooth fibers of \(f\); by \cite{mats99}, they are all Abelian varieties.
Additionally, Matsushita establishes many constraints on the base of a Lagrangian 
fibration. In particular, \(B\) is \(\Q\)-Fano, \(\Q\)-factorial, and log terminal
\cite{mats15}. \(B\) has Picard rank 1, and has the same 
\(\Q\)-intersection cohomology as \(\mathbb{P}^n\). This fact holds even on the 
level of Hodge decompositions, in that every non-zero cohomology class on \(B\)
is Hodge. A central conjecture of Matsushita predicts that \(B \simeq \mathbb{P}^n\) for every
Lagrangian fibration \(f: X \to B\). This result is known to be true in several cases,
including when \(B\) is a curve or a surface \cite{huy-xu22}. In fact, Hwang proves
in \cite{hwa08} that \(B \simeq \mathbb{P}^n\) whenever \(B\) is smooth.

We denote by \(B^\circ \subseteq B\) the complement of the discriminant locus and 
abuse notation to denote by \(f: X^\circ := X_{B^\circ} \to B^\circ\) the smooth 
fibration. We have an associated
period map \(B^\circ \to \leftQ{\mathcal{D}}{\Gamma}\), mapping each Abelian variety
to its associated weight one Hodge structure. By Torelli, the smooth fibers of \(f\)
are determined up to isogeny by the associated polarized variation of Hodge structures
\(V_\Q = R^1 f_* \Q_{X^\circ}\). Thus, a common approch for understanding the geometry
of \(f\) is understanding the structure of local systems underlying variations of 
Hodge structures on \(B\). \cite{geeman-voisin16} and \cite{bak22} affirm a conjecture
of Matsushita, proving that the period map
associated to \(f\) is either constant or generically immersive. In the first case,
we say that \(f\) is \textbf{isotrivial}; in the second, \(f\) is
\textbf{of maximal variation}. Voisin proves that \(V_\R = V_\Q \otimes_\Q \R\) is
irreducible as a variation of Hodge structures \cite{vois18}. To further strengthen
this result, we consider the isotrivial and maximal variation cases separately.

\begin{theorem}[=\Cref{complex-irreducibility}]
    Suppose that \(V_\Q\) is the local system associated to the variation of Hodge
    structures arising from a maximal variation Lagrangian fibration. Then
    \(V_\C = V_\Q \otimes_\Q \C\) is irreducible as a representation of
    \(\pi_1(B^\circ)\).
\end{theorem}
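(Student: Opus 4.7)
The plan is to argue by contradiction, extracting a complex structure $J$ on the real local system $V_\R$ from any putative decomposition of $V_\C$, transferring $J$ to a holomorphic endomorphism of $TB^\circ$ via the Lagrangian identification, and deriving a contradiction from the infinitesimal variation of Hodge structure under the maximal variation hypothesis.

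Suppose $V_\C = W_1 \oplus W_2$ is a nontrivial decomposition into $\pi_1(B^\circ)$-subrepresentations. Complex conjugation is $\pi_1$-equivariant, so it either preserves each summand---producing a $\pi_1$-subrepresentation of $V_\R$, contradicting Voisin's theorem together with the semisimplicity of polarized $\R$-local systems---or swaps them. In the latter case $\overline{W_1} = W_2$, and the operator acting by $\pm i$ on $W_1, W_2$ restricts to a real endomorphism $J \in \End_{\pi_1}(V_\R)$ with $J^2 = -\id$. By Deligne's theorem identifying $\End_{\pi_1}$ with $\End$ as variations, $J$ is a morphism of variations of Hodge structures of type $(0,0)$, and in particular preserves the Hodge decomposition $V_\C = H^{1,0} \oplus H^{0,1}$.

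The Lagrangian condition now provides a canonical fiberwise isomorphism $H^{1,0}|_b \cong T_b B^\circ$ coming from contraction with $\sigma$, under which $J|_{H^{1,0}}$ becomes a holomorphic bundle endomorphism $J_B: TB^\circ \to TB^\circ$ with $J_B^2 = -\id$. This gives a holomorphic splitting $TB^\circ = E^+ \oplus E^-$ into the $\pm i$ eigen-subbundles, together with a dual splitting of $\Omega^1_{B^\circ}$ via the polarization identification $H^{0,1} \cong \Omega^1_{B^\circ}$. The Kodaira--Spencer map $\theta: TB^\circ \to \mathrm{Sym}^2(H^{1,0})^*$ then translates to a totally symmetric cubic $\tilde\theta \in H^0(B^\circ, \mathrm{Sym}^3 \Omega^1_{B^\circ})$, a classical feature of Lagrangian fibrations from special K\"ahler geometry, and the maximal variation of $f$ is equivalent to $\tilde\theta$ being non-degenerate. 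Flatness of $J$ under the Gauss--Manin connection together with its preservation of the polarization yields a twisting relation of the form $\tilde\theta(v_1, J v_2, v_3) = -\tilde\theta(v_1, v_2, J v_3)$; combined with $J^2 = -\id$, this forces $\tilde\theta$ to vanish whenever two of its slots lie in a common eigen-subbundle, and total symmetry propagates the vanishing to every pair of slots. By pigeonhole (three slots, two eigenspaces), $\tilde\theta \equiv 0$, contradicting the maximal variation hypothesis.

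I expect the principal obstacle to lie in this last step: rigorously establishing the total symmetry of $\tilde\theta$ for Lagrangian fibrations rather than invoking it as a known fact, and carefully tracking the precise sign in the twisting relation, require some care in unwinding the interplay between $J$, the symplectic polarization, and the identifications $H^{1,0} \cong TB^\circ$ and $H^{0,1} \cong \Omega^1_{B^\circ}$.
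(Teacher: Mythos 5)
Your approach is genuinely different from the paper's, and it is interesting. The paper first uses level considerations together with Voisin's irreducibility over $\R$ to narrow $V_\C$ to one of $U$, $U^{\oplus 2}$, or $U \oplus \overline U$ (\Cref{mv-rep-classification}), and then eliminates the quaternionic case via the Bakker--Schnell canonical extension of $T_{B^{sm}}$ combined with $B$ having the rational cohomology of $\mathbb{P}^n$, and eliminates the complex case by splitting $T_{B^{sm}}$ into two foliations, observing that Picard rank one forces one to be $\Q$-Fano, and invoking Bogomolov--McQuillan to produce algebraic leaves along which the period map degenerates. Your argument instead stays entirely on $B^\circ$, constructs the flat endomorphism $J$ from the conjugate-swapped decomposition, and kills the Donagi--Markman cubic by a $J$-twisting and pigeonhole computation. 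This is more linear-algebraic and avoids canonical extensions, the foliation machinery, and the global topology of $B$ entirely -- at the cost of importing the total symmetry of $\tilde\theta$, which is a nontrivial piece of special K\"ahler geometry (and which you correctly flag).

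There are two substantive gaps. First, your reduction ``conjugation either preserves each summand or swaps them'' is not literally true for an arbitrary splitting $V_\C = W_1 \oplus W_2$; the clean statement is: take an irreducible $U \subset V_\C$, rule out $U = \overline U$ by Voisin and semisimplicity, and conclude $V_\C = U \oplus \overline U$ since $U + \overline U$ is a real subrepresentation. Second, and more seriously, the twisting relation $\tilde\theta(v_1, Jv_2, v_3) = -\tilde\theta(v_1, v_2, Jv_3)$ requires $q(Jz,w) = -q(z, Jw)$, which holds if and only if $q$ vanishes identically on $W_1 \times W_1$ (and on $W_2 \times W_2$). When $U \not\cong \overline U$ (the ``complex type'' case) this is automatic by Schur, since $U^\vee \cong \overline U \not\cong U$. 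But in the quaternionic case $U \cong \overline U \cong U^\vee$, the restriction $q|_{W_1 \times W_1}$ is a scalar multiple of the unique alternating form on $U$ and need not vanish for the $W_1$ you chose; if it doesn't, your sign computation produces $q(Jz,w) = +q(z,Jw)$ on the diagonal blocks and the pigeonhole fails. The gap is fillable -- writing $V_\C = B \otimes U$ with $q = q_B \otimes q_U$, one checks $q_B$ is a nondegenerate symmetric form on a two-dimensional $B$ and that its two isotropic lines are exchanged by the quaternionic conjugation $\tau$, so an isotropic $W_1$ with $\overline{W_1}$ complementary exists -- but as written the proposal silently assumes it, and this is precisely the case the paper has to work hardest to dispatch.
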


As will be evident from the proof of the previous theorem, the assumption that 
\(f\) is of maximal variation is critical. Not only is there no analogous result
in the isotrivial case, we prove that the \(\C\)-local system underlying 
an isotrivial Lagrangian fibration is never irreducible! 
The isotrivial case was studied in depth by Kim, Laza, and Martin
\cite{kim-laza-martin23}; we recover their structure theorem
\cite[Theorem 1.3]{kim-laza-martin23}.

\begin{theorem}[=\Cref{elliptic-curve-theorem}] \label{intro-elliptic-curve-theorem}
    Suppose that \(f: X \to B\) is isotrivial. Then there is an elliptic curve \(E\)
    such that \(E^n\) is isogenous to the smooth fiber \(X_b\) of \(f\).
\end{theorem}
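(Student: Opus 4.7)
The plan is to use isotriviality to force the Hodge filtration to be monodromy-invariant, which makes the monodromy finite; then Voisin's irreducibility, combined with the Lagrangian structure encoded in the identification \(V^{1,0}\cong T_B\), pins down the structure of the fibre.

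First, since the period map is constant, the monodromy \(\rho\colon \pi_1(B^\circ)\to \mathrm{Sp}(V_\Q,\omega)\) preserves the Hodge filtration on \(V_\C=V^{1,0}\oplus V^{0,1}\). Combined with the preservation of \(\omega\), this places the image of \(\rho\) inside the unitary group \(U(V^{1,0},h)\) associated with the positive-definite Hermitian form \(h(u,v)=i\omega(u,\bar v)\). Since \(U(V^{1,0},h)\) is compact while \(\rho\) takes values in the discrete group \(\mathrm{Sp}(V_\Z,\omega)\), the monodromy group \(G\) is finite. Passing to the \(G\)-Galois étale cover \(\tilde B^\circ\to B^\circ\) that trivialises the monodromy, the pulled-back polarised VHS is constant, and Torelli for polarised abelian varieties identifies all smooth fibres of \(\tilde X^\circ\to \tilde B^\circ\) with a single fibre \(A=X_b\). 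The deck action of \(G\) produces an injection \(G\hookrightarrow \Aut(A,\text{pol})\).

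Next, I would apply Voisin's theorem that \(V_\R\) has no proper sub-VHS. In the isotrivial case a sub-VHS is precisely a \(G\)-invariant sub-Hodge-structure of \(H^1(A,\R)\). Since \(\Hom(A_i,A_j)=0\) for non-isogenous simple abelian varieties, any automorphism of \(A\) preserves its decomposition into isotypic components, so each isotypic piece of \(H^1(A,\Q)\) is a \(G\)-invariant sub-HS; Voisin's irreducibility forces this decomposition to be trivial, and hence \(A\sim B^k\) for some simple abelian variety \(B\) of dimension \(d=n/k\). The problem then reduces to showing \(d=1\).

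The main obstacle is this final step. To handle it I would invoke the Lagrangian structure: the holomorphic symplectic form \(\sigma\) identifies \(V^{1,0}\cong T_B\) as holomorphic vector bundles over \(B^\circ\). Isotriviality makes \(V^{1,0}\) flat, so \(T_{B^\circ}\) inherits a flat connection, and on the cover \(\tilde B^\circ\) the tangent bundle \(T_{\tilde B^\circ}\) is trivial; that is, \(\tilde B^\circ\) is parallelisable. The interplay of this parallelisability with Matsushita's constraints on \(B\) (projective, \(\Q\)-Fano, Picard rank one) and with the equivariance between the \(G\)-action on \(\tilde B^\circ\) and the \(G\)-action on \(A\) is what ultimately forces \(d=1\) and produces an elliptic curve \(E\) with \(X_b\) isogenous to \(E^n\). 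Turning this geometric picture into a clean algebraic argument—recovering the Kim--Laza--Martin conclusion via the monodromy framework developed in this paper—is the essential challenge.
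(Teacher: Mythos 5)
Your proposal starts along a reasonable and different route, but it stops short of the theorem. The finite-monodromy argument is fine (the image of $\rho$ lands in a compact unitary group intersected with an arithmetic lattice), passing to the trivializing Galois cover is fine, and the Voisin irreducibility step does correctly force a single isotypic component, so $X_b \sim B^k$ for a single simple abelian variety $B$. But the step that actually establishes the theorem — forcing $\dim B = 1$ — is exactly the part you flag as the ``essential challenge'' and never supply. Parallelisability of $\tilde B^\circ$ does not by itself control the dimension of the simple factor, and it is not clear how Picard rank one or $\Q$-Fano-ness of $B$ would combine with it to yield an elliptic curve; these constraints live on the base, not on the abelian fiber, and you give no mechanism relating them.

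The paper closes this gap by a local argument you do not consider: it looks at the monodromy $T_b$ around a generic point of a boundary divisor $D \subseteq B^{sm}$, and invokes Bakker--Schnell (\cite[Lemma 3.6]{bak-sch23}), which constrains the non-unit eigenvalues of $T_b$ to be a conjugate pair drawn from $\{\zeta_6,\zeta_4,\zeta_3,-1\}$. The orthogonal complement $H$ of the $1$-eigenspace is therefore a two-dimensional, weight-one sub-Hodge structure, hence $H = H^1(E,\Q)$ for an elliptic curve $E$; combining this with the single-isotypic-component argument (which parallels your use of Voisin) gives $V_\Q \simeq H^{\oplus n}$ at once. So the elliptic curve comes from a sharp eigenvalue constraint on the local monodromy near the discriminant, not from a global parallelisability or Fano argument. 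Without something of that strength, your reduction to $X_b \sim B^k$ is a genuine partial result but not a proof of the theorem.
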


This allows us to understand the splitting behaviour of \(V_\C\) into irreducible 
\(\C\)-local systems. It turns out that any splitting of \(V_\C\) occurs
over a finite extension field \(K\) of \(\Q\), which we identify with the CM field
of the associated elliptic curve.

\begin{theorem}[=\Cref{isotrivial-rep-classification}] 
    Let \(f: X \to B\) be an isotrivial Lagrangian fibration, and let \(E\) be an 
    elliptic curve for which \(X_b\) is isogeneous to \(E^n\) as in \Cref{intro-elliptic-curve-theorem}.
    Denote by \(K = \mathrm{End}_{\Q HS}(H^1(E, \Q))\) the CM field of \(E\). There is
    a splitting of \(V_K\) into
    \(K\)-local systems of rank \(n\), \(V_K = U_1 \oplus U_2\), such that
    \(U_1\) and \(U_2\) are irreducible over \(\C\). Furthermore, 
    \(U_1 \simeq U_2\) if and only if they are defined over \(\Q\).
\end{theorem}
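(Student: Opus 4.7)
By \Cref{elliptic-curve-theorem}, $V_\Q$ is isomorphic as a rational Hodge structure to $H^1(E, \Q)^n$, and isotriviality of $f$ forces the monodromy to preserve this Hodge structure. Hence $\rho: \pi_1(B^\circ) \to \Aut_{\Q HS}(V_\Q) = \GL_n(K)$, and $V_\Q$ becomes a free $K$-module of rank $n$ on which $\rho$ acts $K$-linearly.

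To produce the splitting, I exploit the $K \otimes_\Q K$-module structure on $V_K = V_\Q \otimes_\Q K$ arising from the internal $K$-action on $V_\Q$ and the external $K$-action on the tensor factor. When $K$ is imaginary quadratic, the isomorphism $K \otimes_\Q K \simeq K \times K$, $(x \otimes y) \mapsto (xy, \sigma(x) y)$, with $\sigma$ the nontrivial element of $\mathrm{Gal}(K/\Q)$, produces two orthogonal idempotents and hence a decomposition $V_K = U_1 \oplus U_2$; the monodromy acts on $U_1$ via $\rho$ and on $U_2$ via its Galois conjugate $\sigma \rho$. When $K = \Q$, I take instead the splitting $V_\Q \simeq H^1(E, \Q) \otimes_\Q W \simeq W \oplus W$ induced by any $\Q$-basis of $H^1(E, \Q)$, where $W$ is the rank-$n$ $\Q$-local system carrying $\rho$. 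In either case, each $U_i$ has $K$-rank $n$.

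For the $\C$-irreducibility, the key observation is that after base change along an embedding $K \hookrightarrow \C$, the summand $U_1 \otimes_K \C$ becomes isomorphic as a $\C[\pi_1(B^\circ)]$-module to the Hodge piece $V^{1,0} \subset V_\C$, and likewise $U_2 \otimes_K \C \simeq V^{0,1}$. In the isotrivial setting every sub-VHS of $V_\R$ is determined by a monodromy-invariant $\C$-subspace $A \subset V^{1,0}$ through the correspondence $A \mapsto (A \oplus \bar A) \cap V_\R$; therefore Voisin's theorem, which asserts the absence of any proper sub-VHS of $V_\R$, forces $V^{1,0}$ to have no proper monodromy-invariant $\C$-subspace. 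This yields the $\C$-irreducibility of $U_1 \otimes_K \C$, and $U_2 \otimes_K \C$ follows by complex conjugation.

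For the final clause, note that $\mathrm{Gal}(K/\Q)$ acts on $V_K$ through the second tensor factor and swaps $U_1$ with $U_2$. Consequently $U_1 \simeq U_2$ holds exactly when the isomorphism class of $U_1$ is Galois-invariant; by Hilbert~90 for $\GL_n$, namely the vanishing of $H^1(\mathrm{Gal}(K/\Q), \GL_n(K))$, Galois-invariance of this isomorphism class is equivalent to the existence of a rank-$n$ $\Q$-local system $W$ with $W \otimes_\Q K \simeq U_1$. The converse direction is immediate, since any $\Q$-form of $U_1$ is also one of $U_2 \simeq \sigma U_1$. I expect the main technical obstacle to lie in the identification $U_1 \otimes_K \C \simeq V^{1,0}$ as $\C[\pi_1]$-modules, which requires careful bookkeeping of the two commuting $K$-actions under the explicit idempotent decomposition; once this is in place the remaining steps are formal.
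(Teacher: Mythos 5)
Your construction of the splitting via the idempotents of $K\otimes_\Q K$ is sound, and your argument for $\C$-irreducibility is actually cleaner than the paper's. The identification $U_1 \otimes_K \C \simeq V^{1,0}$ is correct (on $e_1 V_K$ the internal CM action and the external scalar action agree, so base change along $\iota_1: K \hookrightarrow \C$ lands in the $\iota_1$-eigenspace of the CM action, which is $V^{1,0}$), and the bijection $A \mapsto (A\oplus\bar A)\cap V_\R$ between monodromy-invariant subspaces of $V^{1,0}$ and sub-VHS of $V_\R$ — valid exactly because the monodromy preserves the Hodge bigrading in the isotrivial case — reduces everything directly to Voisin's Lemma \ref{real-irreducibility}. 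The paper instead routes through Deligne's structure theorem plus a level/dimension count on the multiplicity space $B$ and a separate global-invariant-cycles bound; your route is shorter for this part.

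The final clause, however, has a genuine gap. The vanishing of $H^1(\mathrm{Gal}(K/\Q),\GL_n(K))$ classifies Galois twists of a $K$-\emph{vector space}, not descent data for a $\pi_1$-representation. For your $K[\pi_1]$-module $U_1$, absolutely irreducible with $\Aut_{K[\pi_1]}(U_1) = K^\times$, an isomorphism $\phi:\sigma^*U_1 \to U_1$ gives a scalar $c = \phi\circ\sigma^*\phi \in \Q^\times$, and rescaling $\phi \mapsto \lambda\phi$ changes $c$ by $N_{K/\Q}(\lambda)$; the obstruction to descent therefore lies in $\Q^\times/N_{K/\Q}(K^\times) \cong \mathrm{Br}(K/\Q)$, which is nontrivial for $K$ imaginary quadratic. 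Concretely, $\End_{\Q[\pi_1]}(V_\Q)$ could a priori be a nonsplit quaternion algebra over $\Q$ split by $K$; then $U_1 \cong \sigma^*U_1 = U_2$ yet $U_1$ is \emph{not} defined over $\Q$ — this is precisely the quaternionic case of \Cref{rep-classification}, which your irreducibility argument does not exclude (it holds there too). To close the gap you must rule out quaternionic type, which is what the paper's argument via Matsushita's rank-one restriction and Deligne's global invariant cycles theorem (giving $h^0(\bigwedge^2 V_\C) = 1$, incompatible with $h^0(\bigwedge^2 U)\neq 0$) accomplishes; Hilbert 90 alone cannot substitute for it.
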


Finally, we relate our results to an alternative classification of isotrivial 
Lagrangian fibrations found in \cite{kim-laza-martin23} to produce a more geometric
description of the irreducible isotrivial \(V_\C\). This extends the discussion in
\cite[Remark 1.8]{kim-laza-martin23} relating the CM structure of the 
elliptic curve \(E\) to a trivializing cover of the local system \(V_\C\).

\begin{proposition}[=\Cref{application-isotrivial-real}]
    Suppose that \(V_\C\) is an irreducible isotrivial complex variation of
    polarized Hodge structures associated to \(f: X \to B\). Then
    a minimal trivializing cover \(C^\circ \to B^\circ\) of \(V_\C\) 
    compactifies to an Abelian torsor \(A \supseteq C^\circ\).
\end{proposition}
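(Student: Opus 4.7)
The plan is to combine \Cref{isotrivial-rep-classification} with the geometric structure theorem for isotrivial Lagrangian fibrations from \cite{kim-laza-martin23}. Their alternative classification realises $f: X \to B$ (birationally) as a quotient $(E^n \times A)/G$, where $A$ is an Abelian torsor, and $G$ is a finite group acting on $E^n$ by polarised Hodge automorphisms and on $A$ by translations through a finite subgroup of its underlying Abelian variety $A_0$. The projection $\pi: A \to B$ is then a Galois cover with group $G$, étale over $B^\circ$.

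First I would pull $V_\Q = R^1 f_* \Q$ back along $\pi$. Over $A^\circ := \pi^{-1}(B^\circ)$ the fibration becomes the trivial product $E^n \times A^\circ \to A^\circ$, so $\pi^* V_\Q$ is the constant local system with fibre $H^1(E^n, \Q)$. The monodromy representation $\rho: \pi_1(B^\circ) \to \Aut(V_\Q)$ therefore factors through $G$, and the decomposition $V_K = U_1 \oplus U_2$ of \Cref{isotrivial-rep-classification} lifts to a $G$-stable decomposition of $H^1(E^n, K)$. Consequently, each irreducible summand of $V_\C$ is the descent of some irreducible $G$-subrepresentation $W \subseteq H^1(E^n, \C)$.

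Setting $H := \ker(G \to \Aut W)$, a minimal trivializing cover of the given irreducible $V_\C$ is then the intermediate Galois cover
\[
  C^\circ := A^\circ / H \longrightarrow B^\circ
\]
with covering group $G/H$. Since $H \subseteq G$ still acts on $A$ by translations, the quotient $A/H$ is a torsor over the Abelian variety $A_0/H$, i.e., an Abelian torsor, and it compactifies $C^\circ$ to yield the desired $A/H \supseteq C^\circ$.

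The main obstacle I anticipate is the first step: to extract from \cite{kim-laza-martin23} the geometric picture in precisely the form used above---in particular verifying, in the presence of birational modifications and a nontrivial discriminant of $f$, that the relevant Galois cover of $B$ is genuinely the étale part of an Abelian torsor and that the $G$-action on it is by translations. Once this dictionary is in place, the remaining steps are essentially formal consequences of Galois descent together with the fact that quotients of Abelian torsors by finite translation subgroups remain Abelian torsors.
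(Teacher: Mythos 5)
The proposal contains a genuine gap at its foundational step, which is in fact where the entire content of the proposition lives.

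You begin by asserting that Kim--Laza--Martin realize any isotrivial Lagrangian fibration birationally as $(E^n \times A)/G$ with $A$ an \emph{Abelian torsor}. This is not what they prove: as the paper itself records just before the examples, the minimal trivializing cover $C^\circ$ compactifies either to an Abelian torsor \emph{or} to a variety of general type, and both cases genuinely occur. The first example in Section~5 exhibits an isotrivial elliptic K3 surface with Kodaira type~II fibers whose degree-six trivializing cover is a curve of genus twenty-five, hence of general type. So the premise of your first step is false for general isotrivial $f$, and the argument is essentially circular: you are assuming that $A$ (and hence each intermediate quotient $A/H$) is Abelian, which is precisely what the proposition asks you to prove. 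Your acknowledged ``obstacle'' is not a technical extraction problem to be outsourced to \cite{kim-laza-martin23}; it is the actual mathematical content, and the remaining steps of your argument are comparatively formal.

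What the paper does instead, and what your proposal omits entirely, is use the representation-theoretic hypothesis to constrain the \emph{local} monodromy around the discriminant. Combining \Cref{isotrivial-rep-classification} with \Cref{klm-kodaira-type}, if $V_\C$ is of real type then no local monodromy operator can act by a nontrivial CM automorphism, forcing its order to be $2$ and the general singular fiber to have Kodaira type $\mathrm{I}_0^*$. Then \cite[Proposition 2.24]{kim-laza-martin23}, which characterizes exactly when $C^\circ$ compactifies to an Abelian torsor in terms of Kodaira types $\mathrm{I}_0^*$, $\mathrm{II}^*$, $\mathrm{III}^*$, $\mathrm{IV}^*$, finishes the argument. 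Any correct proof must do some version of this; nothing in your proposal forces the general-type case to be excluded. A secondary issue: your claim that $G$ acts on $A$ by translations cannot be right even in the Abelian case, since then $B = A/G$ would again be an Abelian torsor, contradicting the fact that $B$ has the $\Q$-cohomology of $\mathbb{P}^n$; in the worked examples $G$ acts by CM automorphisms or by $\pm 1$, not translations.
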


\begin{remark}
Suppose further that \(f\) admits a rational section, and that for
every Lagrangian fibration \(X' \dashrightarrow B'\) birational to \(f\)
the conjecture of Matsushita holds (i.e. \(B' \cong \mathbb{P}^n\)). Then
as an immediate consequence of \cite[Theorem 1.8]{kim-laza-martin23},
\(f\) is birational to a \(\text{K3}^{[n]}\)-fibration or a 
\(\text{Kum}_n\)-fibration.
\end{remark}

\subsection*{Acknowledgements}
I thank my advisor Ben Bakker for suggesting this approch, as well as his 
long-standing patience and support throughout many helpful conversations.

\subsection*{Notation and conventions}
Throughout this paper, we mainly stick to the following notations:
\begin{itemize}
\item \(X\) a (compact) hyperk\"ahler manifold of dimension \(2n\) with symplectic
form \(\sigma\)
\item \(f: X \to B\) a Lagrangian fibration
\item \(B^{sm}\) the smooth locus of \(B\), \(B^\circ \subseteq B^{sm}\) the 
regular values of \(f\)
\item \(j: B^{sm} \to B\) the natural inclusion, 
\(\Omega_B^{[k]} = j_* \Omega_{B^{sm}}^k\) the sheaf of reflexive differentials
\item \(f: X^\circ \to B^\circ\) the restriction of \(f\) to the smooth fibers
\item \(V_\Q\) the local system \(R^1 f_* \Q_{X^\circ}\) on \(B^\circ\)
\item \((V_\Q, (\mathcal{V}, \nabla), F^\bullet \mathcal{V}, q)\) the associated 
variation of Hodge structures, where
\item \(\mathcal{V} = V \otimes_\Q \mathcal{O}_{B^{\circ}}\) is a bundle equipped 
with a flat connection \(\nabla\)
\item \(F^\bullet\) a decreasing filtration of \(\mathcal{O}_{B^\circ}\)-modules on
\(\mathcal{V}\)
\item \(q: V_\Q \otimes V_\Q \to \Q_{B^\circ}(-1)\) a polarization form
\item \(\mathcal{V}^{>-1}\) Deligne's canonical extension of \(\mathcal{V}\) 
to \(B^{sm}\), with residue eigenvalues in \((-1, 0]\)
\item \(V_K = V_\Q \otimes_\Q K\) for \(\Q \subseteq K \subseteq \C\) a field 
extension
\item \(U \subseteq V_\C\) an irreducible \(\C\)-local system
\item \(W \subseteq V_\R\) an \(\R\)-local system
\end{itemize}

\section{Structure Theory of Complex Representations}

Before we begin our discussion of complex representations arising from real 
representations, we review complex variations of Hodge structures. As with 
rational or real variations, we have two descending filtrations \(F^\bullet\)
and \(\overline{F}^\bullet\) on a \(\C\)-local system \(V\), which are holomorphic and 
anti-holomorphic respectively. For a clean summary, see \cite{bak22}. The key
difference with the more conventional notions of variations of Hodge structure 
comes from the lack of bigrading; since there is no longer a clear notion of 
complex conjugation, \(V\) only admits a (mono)grading 
\(V^p := F^p V \cap \overline{F}^{-p} V\).

The key invariant for complex variations is the level; this is the 
difference between the maximum and minimum \(p\) for which \(V^p \neq 0\). For
example, \(\C(-p)\) is level zero, while the variation associated to a family of 
ellipic curves is level one. Notice that if \(V\) is level \(l\) and 
\(V'\) is level \(l'\), the tensor product \(V \otimes V'\) is level \(l + l'\).

Drawing upon the theorem of the fixed part \cite{schmid73}, Deligne proves the 
following crucial structure theorem for complex variations of Hodge structure.

\begin{proposition}[\cite{del87}, Proposition 1.13]\label{deligne-structure-theorem}
    Suppose $V$ is a $\C$-local system underlying a polarized variation of Hodge 
    structures. We have a splitting of $\C$-local systems
    \begin{equation}
        \label{prime-factorization}
        V = \bigoplus_i B_i \otimes U_i
    \end{equation}
    where the $U_i$ are irreducible and pairwise non-isomorphic $\C$-local systems,
    and the $B_i$ are $\C$-vector spaces. Moreover,
    \begin{itemize}
        \item[(i)] Each $U_i$ underlies a polarized complex variation of Hodge 
        structures, unique up to shifting the Hodge grading, and
        \item[(ii)] Each polarizable complex variation of Hodge structure of \(V\) 
        arises from \(\mathrm{(i)}\) by equipping each $U_i$ with its unique (up to shifts) polarizable complex
        variation of Hodge structures, and equipping each $B_i$ with a polarizable
        complex Hodge structure coming from \cite{schmid73}, via
        $B_i = \mathrm{Hom}(V, U_i)$.
    \end{itemize}
\end{proposition}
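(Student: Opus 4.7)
The plan is to combine semisimplicity of polarizable variations with Schmid's theorem of the fixed part to descend Hodge structures from $V$ to both the irreducible constituents $U_i$ and the multiplicity spaces $B_i$. First, I would obtain the underlying decomposition as a $\C$-local system: since the polarization supplies a non-degenerate $\pi_1$-invariant pairing, the orthogonal complement of any sub-local system is again a sub-local system, so $V$ is semisimple as a representation of $\pi_1$. Grouping isotypic components and choosing bases on the multiplicity spaces yields $V \simeq \bigoplus_i B_i \otimes U_i$ with $B_i := \Hom_{\pi_1}(U_i, V)$ and the $U_i$ pairwise non-isomorphic irreducibles.

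The heart of the proof is (i), putting a complex VHS on each $U_i$, unique up to shift. Uniqueness follows from Schur's lemma: any flat isomorphism $U_i \to U_i$ compatible with two Hodge filtrations must be a scalar, so the filtrations can differ only by an overall shift of the grading. For existence, $\End(V) \cong V^\vee \otimes V$ is itself a polarized complex VHS, and by Schmid's theorem of the fixed part its space of flat global sections $A := \End_{\pi_1}(V)$ inherits a pure Hodge structure. The identity of $V$ is of type $(0,0)$ since it is a morphism of VHS, and the isotypic projectors $p_i \in A$ are the primitive central idempotents of this semisimple algebra. An argument using that the center of $A$ is a sub-Hodge structure canonically isomorphic to a product of copies of $\C$ shows the $p_i$ are themselves of type $(0,0)$, so the isotypic pieces $V_i = p_i V$ are sub-VHS. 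Choosing a flat identification $V_i \simeq B_i \otimes U_i$, the Hodge filtration decomposes as a tensor of filtrations, defining the sought-after VHS on $U_i$.

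For (ii), once $U_i$ carries its VHS, the local system $\Hom(U_i, V) \cong U_i^\vee \otimes V$ is a polarized VHS, and applying Schmid's theorem of the fixed part once more endows $B_i = \Hom_{\pi_1}(U_i, V)$ with a polarizable Hodge structure. The evaluation $B_i \otimes U_i \to V$ is a morphism of VHS identifying $B_i \otimes U_i$ with the $i$-th isotypic component, which recovers the polarized VHS on $V$ from the two factors; conversely, any polarizable VHS on $V$ must induce these same Hodge structures on $U_i$ and on $B_i$ by the same constructions.

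The main obstacle I expect is the claim that the isotypic projectors are of type $(0,0)$ in the Hodge structure on $A$: a generic idempotent in a Hodge ring need not be pure, so the argument must exploit that the $p_i$ are singled out canonically by the ring structure of the center and that the induced Hodge structure on $A$ is compatible with multiplication, which forces the unique ring-theoretic decomposition $1 = \sum p_i$ to be respected by the bigrading.
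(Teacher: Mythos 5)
The paper does not prove this proposition; it is cited directly from Deligne's 1987 paper, with only the remark that the argument draws on Schmid's theorem of the fixed part. Your sketch follows that strategy and is broadly sound, but the opening step glosses over the genuine content. From the flat polarization pairing alone you cannot conclude semisimplicity: the pairing restricted to a sub-local system $W$ may be degenerate (indeed $W$ can be isotropic), so $W \oplus W^{\perp} = V$ is not automatic. What one actually needs — and this is Deligne's semisimplicity theorem, the real input here — is that any flat sub-local system of a polarizable complex VHS is automatically a sub-VHS; that is proved via the Griffiths curvature estimate together with the theorem of the fixed part, and only afterward does the restricted form become a polarization on $W$ and produce a flat complement. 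With that in hand, your argument via $A = \End_{\pi_1}(V)$ works, and the concern you flag about the idempotents being of type $(0,0)$ can be resolved either by the power-growth argument on central elements (a nonzero $z$ of type $(p,-p)$ with $p \neq 0$ has nonzero powers $z^n$ of type $(np,-np)$, contradicting finite-dimensionality of the center), or more directly by observing that each isotypic summand is a canonical sub-local system, hence a sub-VHS, so the projectors are automatically morphisms of VHS.

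Two smaller points. The uniqueness-up-to-shift of the VHS on $U_i$ should be phrased by applying the fixed part theorem to the local system of homomorphisms between the two candidate VHS structures and invoking Schur there, rather than positing a flat isomorphism compatible with both filtrations a priori (which is what you are trying to prove exists). And there is a convention mismatch with the paper's statement: the paper writes $B_i = \Hom(V, U_i)$, which is the dual of your $\Hom_{\pi_1}(U_i, V)$; yours is the one for which $B_i \otimes U_i \to V$ is the natural evaluation map.
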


The subsequent observation made in \cite{bak22} is that if $V$ is a $\C$-local system 
equipped with a polarized complex variation of Hodge structures, then
$V =  W_\C$ for some polarized real variation of Hodge structures $W$ 
if and only if complex conjugation of $V$ flips the Hodge grading. Thus, every
real local system arising from a real polarizable variation of Hodge structure is
semi-simple.

This allows us to draw on the following classification theorem, which is a classical
result of representation theory. Despite the fact that this theorem is well known,
we could not find a satisfactory reference for representations of (non-finite) groups,
so we have included a proof below.
\begin{proposition} \label{rep-classification}
    If $V_\R$ is an irreducible real representation underlying a polarizable 
    variation of Hodge structures, then the isotypic factors of 
    $V_\C$ coming from \Cref{deligne-structure-theorem} have the following
    distinct forms:
    \begin{itemize}
    \item[\textbf{(R)}]  $B \otimes W_\C$, for $W$ an irreducible 
    $\R$-local system,
    \item[\textbf{(C)}] 
    $(B \otimes U) \oplus (\overline{B} \otimes \overline{U})$ for
    $U$ an irreducible $\C$-local system,
    \item[\textbf{(Q)}]  $B \otimes U$, where $U^{\oplus 2} = W_\C$
    for $W$ an irreducible $\R$-local system.
\end{itemize}
\end{proposition}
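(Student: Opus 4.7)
The plan is to reduce the classification to the classical Frobenius--Schur trichotomy for irreducible real representations, combined with the semisimplicity of $V_\C$ established in the paragraph preceding the proposition. The essential inputs are Schur's lemma applied to $V_\R$, Frobenius's theorem on finite-dimensional real division algebras, and an identification of the complex commutant $\End_{\pi_1(B^\circ)}(V_\C)$ in terms of the real commutant of $V_\R$.

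I would set $D := \End_{\pi_1(B^\circ)}(V_\R)$. Schur's lemma gives that $D$ is a real division algebra, and its embedding in $\End_\R(V_\R)$ forces it to be finite-dimensional, so by Frobenius $D \in \{\R, \C, \mathbb{H}\}$. Since a $\pi_1$-equivariant $\C$-linear endomorphism of $V_\C = V_\R \otimes_\R \C$ is uniquely a $\C$-linear combination of $\pi_1$-equivariant $\R$-linear endomorphisms of $V_\R$, I obtain $\End_{\pi_1(B^\circ)}(V_\C) = D \otimes_\R \C$, which is $\C$, $\C \times \C$, or $M_2(\C)$ in the three respective cases. Pairing this with the semisimplicity of $V_\C$ and the Wedderburn-type description $\End_{\pi_1(B^\circ)}(\bigoplus_i U_i^{\oplus k_i}) = \prod_i M_{k_i}(\C)$ of the commutant of an isotypic decomposition (valid since $\C$ is algebraically closed, so Schur gives $\End_{\pi_1(B^\circ)}(U_i) = \C$), I can read off the decomposition in each case: if $D = \R$ then $V_\C$ is irreducible; if $D = \C$ then $V_\C = U_1 \oplus U_2$ for non-isomorphic irreducible summands; if $D = \mathbb{H}$ then $V_\C = U^{\oplus 2}$ for a single irreducible $U$.

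To match the three cases with the claimed forms, I would then analyze how the complex conjugation on $V_\C$ induced by the real structure on $V_\R$ acts on the isotypic decomposition. The $D = \R$ case is immediate, as $V_\C = (V_\R)_\C$ already has form \textbf{(R)} with $W = V_\R$. In the $D = \C$ case, conjugation must permute the two isotypic components: if $\overline{U_1} \simeq U_1$, then a real structure would descend to a proper $\pi_1$-invariant real subspace of $V_\R$, contradicting its irreducibility; hence $\overline{U_1} \simeq U_2$, yielding form \textbf{(C)}. In the $D = \mathbb{H}$ case, the unique isotypic class is conjugation-stable, so $\overline{U} \simeq U$; yet $U$ itself cannot descend to a real representation, for the same irreducibility reason, which forces $W_\C = V_\C = U^{\oplus 2}$ with $W = V_\R$ and produces form \textbf{(Q)}.

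The main obstacle I anticipate is this final bookkeeping step. The Wedderburn-style argument is routine once the commutant has been identified, but correctly tracking the conjugation action on the isotypic decomposition and ruling out further real descents requires careful use of the irreducibility of $V_\R$ beyond its role in invoking Schur. This is the place where the interaction between $D$, the Galois action of $\mathrm{Gal}(\C/\R)$, and the Deligne decomposition must be handled precisely in order to distinguish forms \textbf{(C)} and \textbf{(Q)}.
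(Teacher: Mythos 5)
Your argument is correct but takes a genuinely different route from the paper. You run the classical Frobenius--Schur machinery from the endomorphism-algebra side: Schur's lemma makes $D := \End_{\pi_1(B^\circ)}(V_\R)$ a real division algebra, Frobenius's theorem pins it to $\R$, $\C$, or $\mathbb{H}$, the flat base-change identity $\End_{\pi_1(B^\circ)}(V_\C) = D \otimes_\R \C$ combined with the Wedderburn description of the commutant of a semisimple module forces the isotypic shape, and the action of complex conjugation then sorts the three cases. The paper never mentions division algebras at all; it works directly with the polarization form $q$ and the Hermitian form $h$, constructs the anti-linear map $\sigma = \psi^{-1}\circ\varphi$ satisfying $q(x,\sigma(y)) = h(x,y)$, shows $\sigma^2 = \lambda\cdot\mathrm{id}$ with $\lambda$ a nonzero real by a positivity computation, and reads the type off the sign of $\lambda$ (equivalently, off whether $\varphi$ is symmetric or alternating). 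Your route is the cleaner, more standard piece of representation theory and would work for any semisimple real representation with no reference to Hodge theory beyond the semisimplicity input; the paper's route is more hands-on but pays for itself by directly producing \Cref{proof-of-rep-classification}, the criterion detecting the type via $h^0(\mathrm{Sym}^2 U)$ versus $h^0(\bigwedge^2 U)$, which the paper uses later in \Cref{isotrivial-rep-classification} and which would need a separate derivation in your framework. One small remark: your commutant computation shows that the irreducibility of $V_\R$ actually forces the multiplicity spaces $B_i$ in the statement to be of dimension $1$, $1$, $2$ in the three cases, which is slightly sharper than the statement as written and worth noting explicitly.
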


In the first case, we say that \(V_\C\) is of \textbf{real type}; in the second, 
that \(V_\C\) is of \textbf{complex type}; and in the third case, that \(V_\C\) is of 
\textbf{quaternionic type}.

\begin{proof}
Suppose that $U$ is a complex irreducible factor as above. If $U \neq \overline{U}$,
then somewhere in the factorization of $V_\C$ there is a factor $\overline{U}$,
putting us in the second case. Thus, we may suppose that $U = \overline{U}$. The
restriction of the polarization form to $U$ gives a non-degenerate pairing
\( q: U \times U \to \C(-d) \), inducing an isomorphism of $\C$-vector spaces
\begin{equation}
    \varphi: U \to U^\vee, \quad z \mapsto q(-, z).
\end{equation}
We may associate to $\varphi$ the maps $\varphi^{\textrm{sym}} = \varphi + \varphi^T$
and $\varphi^{\textrm{alt}} = \varphi - \varphi^T$, for \(\varphi^T\) the transpose.
If both maps are non-zero, observe that they are linearly independent. Now if we take 
\(h: U \times \overline{U} \to \C(-d)\) to be the Hermitian form associated to the
polarization \(q\), we get a similar isomorphism 
\begin{equation}
    \psi: \overline{U} \to U^\vee, \quad \overline{z} \mapsto h(-, \overline{z}).
\end{equation}
Since $U = \overline{U}$, we see that $\varphi$, $\varphi^{\textrm{sym}}$, and
$\varphi^{\textrm{alt}}$ live in $\End(U)$. By Schur's lemma, they are therefore
given by multiplication by a complex number. In particular, the latter two are
linearly dependent, a contradiction. Thus, one of them is zero. This shows that
$\varphi$ is either symmetric or alternating.

Next, consider the following diagram:
\begin{equation*}
    \begin{tikzcd}
    U \arrow[dashed]{d}{\sigma} \arrow{rr}{\varphi} && U^\vee \arrow[equals]{d} \\
    \overline{U} \arrow{rr}{\psi} && U^\vee
    \end{tikzcd}
\end{equation*}

We may uniquely define 
\(\sigma = \psi^{-1} \circ \varphi: U \to \overline{U}\)
an anti-linear map relating the polarization form $q$ with the Hermitian form $h$:
\begin{equation}
    q(x, \sigma(y)) = h(x, y).
\end{equation}
Notice that $\sigma^2: U \to U$ is $\C$-linear, so again by Schur's lemma we may
express $\sigma^2 = \lambda \cdot \mathrm{id}$ for some non-zero $\lambda \in \C$.
Applying this in conjunction with our observation that $\varphi$ is either symmetric
or alternating, we get
\begin{align*}
    h(\sigma(x), \sigma(x)) &= q(\sigma(x), \sigma^2(x)) \\
    &= q(\sigma(x), \lambda x) \\
    &= \lambda q(\sigma(x), x)) \\
    &= \pm \lambda q(x, \sigma(x)) \\
    &= \pm \lambda h(x, x).
\end{align*}
But because $h$ is positive definite, we see that $\lambda$ is a positive real
precisely when $\varphi$ is symmetric, and a negative real when $\varphi$ is
alternating.

Now suppose that $\lambda > 0$. Rescaling $\sigma$ by $\lambda^{-\frac{1}{2}}$, we get
$\sigma^2 = \mathrm{id}$. Let $\text{Res}_{\C/\R}(U)$ be the underlying
$\R$-vector space, and consider the induced map 
\((\sigma - \mathrm{id}): \text{Res}_{\C/\R}(U) \to \text{Res}_{\C/\R}(U)\).
This map has a half-dimensional kernel, namely the 1-eigenspace of \(\sigma\), which
we denote $W$. Noting that $W_\C = U$ shows that $W$ must be irreducible,
placing us in the first case. Alternatively, suppose that $\lambda < 0$.
Rescaling $h$ by $(-\lambda)^{-\frac{1}{2}}$,
we get $\sigma^2 = -\mathrm{id}$. We can see that $U$ has no underlying real structure:
indeed, such a structure comes with a symmetric form and, as we have shown above, all forms
on $U$ are alternating. However, we may define
\begin{equation}
    \Sigma = \begin{pmatrix}
        0 & \sigma \\
        -\sigma & 0
    \end{pmatrix}: U^{\oplus 2} \to U^{\oplus 2}.
\end{equation}
Then we once again have $\Sigma^2 = \textrm{id}$, so as above, we take $W$ to be the
1-eigenspace of $\text{Res}_{\C/\R}(U^{\oplus 2})$. We must now argue that $W$ is
irreducible as an $\R$-representation. Write \(W = W' \oplus W''\) for some 
\(\R\)-subrepresentations \(W', W'' \subseteq W\). We will argue that one of 
\(W', W''\) is zero. Tensor by \(\C\) to get \(U^{\oplus 2} = (W')_{\C} \oplus (W'')_{\C}\).
$U$ is irreducible, so either $(W')_\C = (W'')_\C = U$, or one of the factors
is zero. The former is impossible, as that would imply that $U$ has an underlying
real structure $W'$. This proves that $W$ is irreducible, putting us in the third case.
\end{proof}

Notice from  the proof of \Cref{rep-classification} that the crucial object to 
consider is \(\bigwedge^2 V_\C\). As a local system, its global sections probe for
irreducible complex subrepresentations of every type, as we will see later. 
In particular, we have the following corollary.

\begin{corollary}\label{proof-of-rep-classification}
    Let \(U\) be an irreducible complex variation of Hodge structures with 
    \(U \simeq \overline{U}\). Then \(h^0(\mathrm{Sym}^2 U) = \C\varphi \neq 0\) 
    if and only if \(U\) is of real type, and \(h^0(\bigwedge^2 U) = \C \varphi \neq 0\)
    if and only if \(U\) is of quaternionic type.
\end{corollary}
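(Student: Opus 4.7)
The plan is to combine Schur's lemma with the analysis already carried out in the proof of \Cref{rep-classification}. Since $U$ is irreducible, Schur gives $h^0(U^\vee \otimes U) = \dim \End(U) = 1$. On the other hand, the polarization $q$ induces (up to a Tate twist, which is irrelevant for counting global sections) an isomorphism of local systems $\varphi: U \xrightarrow{\sim} U^\vee$, so transporting along $\varphi$ yields $h^0(U \otimes U) = 1$ as well.

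Next, I would invoke the canonical decomposition $U \otimes U = \text{Sym}^2 U \oplus \bigwedge^2 U$. Since the total space of global sections is one-dimensional, exactly one of the two summands contributes a line of sections while the other has no global sections, and the non-zero section is spanned by the image of $\varphi$ under the identification coming from $\varphi: U \simeq U^\vee$.

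Finally, to match the two possibilities with the two types, I would revisit the argument inside the proof of \Cref{rep-classification}. Viewed as an element of $U^\vee \otimes U^\vee$, the map $\varphi$ lies in $\text{Sym}^2 U^\vee$ precisely when $\varphi^{\textrm{alt}} = 0$, and in $\bigwedge^2 U^\vee$ precisely when $\varphi^{\textrm{sym}} = 0$. That proof already identifies the symmetric case as the one producing $\lambda > 0$, hence $U$ of real type, and the alternating case as producing $\lambda < 0$, hence $U$ of quaternionic type. Transporting once more along $U \simeq U^\vee$ to move between $\text{Sym}^2 U^\vee \simeq \text{Sym}^2 U$ and $\bigwedge^2 U^\vee \simeq \bigwedge^2 U$ as local systems then gives the statement.

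The main ``obstacle'' is really just notational bookkeeping: keeping track of the several incarnations of $\varphi$ (as a map $U \to U^\vee$, as an element of $U^\vee \otimes U^\vee$, and, after pulling back via $\varphi$ itself, as a section of $\text{Sym}^2 U$ or $\bigwedge^2 U$), and confirming that the Tate twists do not interfere with the counting of global sections. No new geometric input is needed beyond the symmetric/alternating dichotomy already established.
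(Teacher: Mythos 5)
Your proposal is correct and follows essentially the same route as the paper, which treats the corollary as a direct readout from the symmetric/alternating dichotomy established for $\varphi$ in the proof of \Cref{rep-classification}. The Schur's-lemma count $h^0(U^\vee\otimes U)=1$, transport along $\varphi:U\simeq U^\vee$ (which, as you note, only exists because $U\simeq\overline U$), and the splitting $U\otimes U=\text{Sym}^2U\oplus\bigwedge^2U$ are exactly the ingredients needed to upgrade ``$\varphi$ is symmetric or alternating'' into the stated $h^0$ characterization.
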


Voisin proves another very useful theorem regarding the irreducibility of \(V_\R\)
by studying the global sections of \(\bigwedge^2 V_\R\).

\begin{lemma}[\cite{vois18}, Lemma 4.5] \label{real-irreducibility}
    Let \(f: X \to B\) be a Lagrangian fibration, and \(V_\R = R^1 f_* \R_{X^\circ}\).
    $V_\R$ is irreducible as a polarizable variation of real Hodge structures.
\end{lemma}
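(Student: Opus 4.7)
The strategy is by contradiction. Suppose $V_\R$ admits a non-trivial orthogonal splitting $V_\R = V_1 \oplus V_2$ as polarizable real variations of Hodge structures. The restrictions $q_1, q_2$ of the polarization $q$ to the two summands each give a morphism of variations $V_i \otimes V_i \to \R(-1)$, hence produce Hodge classes in $H^0(B^\circ, \wedge^2 V_\R^\vee)$; crucially, $q_1$ is linearly independent from the full polarization $q = q_1 + q_2$. The plan is to show that no such extra Hodge class can exist, by using the global geometry of $X$ to rigidify $H^0(B^\circ, \wedge^2 V_\R)$.

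I would analyze this space of global sections via the Leray spectral sequence for the smooth projective map $f: X^\circ \to B^\circ$. Because the fibers are Abelian varieties, there is a canonical isomorphism
\begin{equation*}
    R^2 f_* \R_{X^\circ} \;\cong\; \wedge^2 R^1 f_* \R_{X^\circ} \;=\; \wedge^2 V_\R.
\end{equation*}
By Deligne's $E_2$-degeneration for smooth projective maps, $H^0(B^\circ, \wedge^2 V_\R)$ is a direct summand of $H^2(X^\circ, \R)$. After extending across the discriminant locus of $f$ using the canonical extension $\mathcal{V}^{>-1}$, properness, and monodromy-invariance, the Hodge classes $q_1, q_2$ produce $(1,1)$-classes $\alpha_1, \alpha_2 \in H^{1,1}(X, \R)$ whose sum is a multiple of the pullback $f^* h$ of an ample class $h$ on $B$.

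The contradiction should come from the Beauville--Bogomolov form $q_X$ on $H^2(X, \R)$. The class $f^* h$ is BB-isotropic, while each $\alpha_i$ restricts to the polarization of the sub-Abelian variety of each fiber cut out by $V_i$ and is therefore fiberwise positive. A Fujiki-type computation combined with Matsushita's Picard-rank-one theorem (which constrains any hyperk\"ahler-compatible class BB-orthogonal to $f^* h$) should rule out the existence of such an $\alpha_1$ distinct from a scalar multiple of $f^* h$. The main obstacle is this last step: carrying out the BB calculation cleanly and ruling out the degenerate case in which $\alpha_1$ is already proportional to $f^* h$, which requires exploiting the fiberwise positivity arising from the polarized sub-variation $V_1$. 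A secondary subtlety is extending $\alpha_i$ from $B^\circ$ across the singular fibers, which requires careful control of monodromy and a reflexive extension argument along the discriminant.
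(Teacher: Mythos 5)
Your setup is broadly parallel to the paper's: both start from the observation that a splitting $V_\R = V_1 \oplus V_2$ would produce two linearly independent global sections of $\wedge^2 V_\R$ (via the restricted polarizations $q_1, q_2$), and both pass through Deligne's global invariant cycles theorem to relate $H^0(B^\circ, \wedge^2 V_\R) = H^0(B^\circ, R^2 f_* \R_{X^\circ})$ to $H^2(X, \R)$. Up to that point you are on the same track.

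The divergence — and the gap — is in the closing step. The paper finishes in one line by citing Matsushita's theorem (\cite{mats16}) that the restriction $H^2(X, \R) \to H^2(X_b, \R)$ has rank one for a generic fiber; combined with surjectivity of $H^2(X, \R) \twoheadrightarrow H^0(B^\circ, R^2 f_* \R_{X^\circ})$, this immediately bounds $\dim H^0(B^\circ, \wedge^2 V_\R) \leq 1$, contradicting the existence of two independent sections. You instead propose a Beauville--Bogomolov/Fujiki/Picard-rank-one computation to ``rule out the existence of such an $\alpha_1$ distinct from a scalar multiple of $f^*h$.'' This doesn't quite make sense as stated: any K\"ahler class on $X$ is fiberwise positive and is not a multiple of $f^*h$, so ruling out $\alpha_1$ being distinct from a scalar multiple of $f^*h$ is not the right goal. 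What must be ruled out is that the \emph{images} of $\alpha_1$ and $\alpha_2$ in $H^2(X_b, \R)$ span a space of dimension $\geq 2$ — which is precisely Matsushita's rank-one theorem. Your instinct to use the BB form and its Hodge-index behavior is not wrong (that is in fact how the rank-one theorem is proved), but as written you have deferred the entire mathematical content to a ``should'' that amounts to reproving a nontrivial known result, and you flag this yourself as an obstacle. Citing Matsushita's rank-one restriction theorem closes the argument cleanly; without it, the proof is incomplete. A second, minor point: the appeal to Picard rank one of $B$ is a red herring here — it constrains $H^2(B)$, not the restriction $H^2(X, \R) \to H^2(X_b, \R)$ — and the extension-across-the-discriminant machinery you invoke (canonical extension, reflexive extension) is unnecessary once global invariant cycles is applied to the compact $X$, which already gives monodromy-invariant classes directly.
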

\begin{proof}
    By \cite{mats16}, the restriction $H^2(X, \R) \to H^2(X_b, \R)$ is
    rank 1 for $X_b$ a generic fiber. But by Deligne's global invariant cycles theorem
    \cite{del71}, the map
    \begin{equation}
        \label{deligne-inv-cycles}
        H^2(X, \R) \to H^0(B^\circ, R^2 f_* \R_{X^\circ})
    \end{equation}
    is surjective. In particular, the dimension of the codomain of
    \eqref{deligne-inv-cycles} is at most 1. Note that
    $R^2 f_* \R_{X^\circ} = \bigwedge^2 V_\R$, so if $V_\R$ split as a variation,
    the space of sections of $\bigwedge^2 V_\R$ would be greater than one-dimensional,
    a contradiction.
\end{proof}

\begin{remark}
Throughout this paper, \(X\) and \(B\) are assumed to be compact. This assumption 
is not strictly necessary for many of our tools 
(i.e. \Cref{deligne-structure-theorem}, \Cref{proof-of-rep-classification},
\Cref{deligne-ext}), but \Cref{real-irreducibility} is not guaranteed when \(B\)
is non-compact. 
\end{remark}

\section{Review of Algebraic Foliations}

We closely follow the survey of Araujo \cite{araujo18}, taking their conventions.

\begin{definition}
    A foliation \(\mathcal{E}\) on a normal variety \(B\) is a saturated coherent
    subsheaf of the tangent sheaf which is closed under the ambient Lie bracket.
    The canonical class \(K_\mathcal{E}\) is any Weil divisor for which
    \(\det(\mathcal{E}) \simeq \mathcal{O}_B(-K_\mathcal{E})\).
    We say that \(\mathcal{E}\) is (\(\Q\)-)Gorenstein if \(K_\mathcal{E}\) is 
    (\(\Q\)-)Cartier. If \(-K_{\mathcal{E}}\) is ample, we say that \(\mathcal{E}\)
    is (\(\Q\)-)Fano.
\end{definition}

If we denote the (generic) rank of \(\mathcal{E}\) by \(r\), we may consider
the so-called Pfaff field  (\cite[Definition 2.2]{araujo18})
\begin{equation} \label{eq-pfaff-field}
    \Omega_B^r \to \Omega_B^{[r]} \to \det(\mathcal{E})^\vee \simeq 
    \mathcal{O}_B(K_{\mathcal{E}}).
\end{equation}
Here and throughout this paper the notation \(\Omega_B^{[r]}\) denotes the
reflexivization \(\left(\bigwedge^r T_B\right)^\vee\). We define the singular locus 
of \(\mathcal{E}\) to be the closed subscheme of \(B\) whose ideal sheaf is the image
of the associated map \(\left(\Omega_B^r \otimes \det(\mathcal{E})\right)^{\vee \vee}
\to \mathcal{O}_B\). On the smooth locus of \(B\), this is simply the subvariety
where \(\mathcal{E}\) is not a bundle.

\begin{definition}
    An analytic subvariety \(Z \subseteq B\) not contained in the singular locus of 
    \(\mathcal{E}\) is called invariant if on the smooth locus of \(Z\), 
    \eqref{eq-pfaff-field} factors through the induced map on tangent bundles as
\begin{equation*}
    \begin{tikzcd}
    \Omega_B^r|_{Z^{sm}} \arrow{r} \arrow{rd} & \Omega_Z^r|_{Z^{sm}} \arrow{d} \\
    & \mathcal{O}_B(K_\mathcal{E})|_{Z^{sm}}
    \end{tikzcd}
\end{equation*}
A maximal invariant subvariety of dimension \(r\) is called a leaf of \(\mathcal{E}\).
\end{definition}

On the regular locus of the foliation, a generalization
of the classical Frobenius theorem ensures the existence of smooth submanifolds
whose tangent bundles recover \(\mathcal{E}\). For details, see for instance 
\cite[Proposition 1.3.3]{bog-mcq16}. However, despite the fact that \(\mathcal{E}\)
is an algebraic sheaf, it is generally false that the leaves of \(\mathcal{E}\)
are themselves algebraic. To this end, Bogomolov and McQuillan give a sufficient
condition to ensure the existence of rationally connected algebraic leaves.

\begin{proposition}[\cite{bog-mcq16}, Main Theorem] \label{bog-mcq-main-theorem}
    Let $\mathcal{E}$ be a foliation on a normal projective variety $B$,
    and let $C \subseteq B$ be a curve on which $\mathcal{E}|_C$ is ample. Then for
    any $p \in C$, the leaf $Z \ni p$ is an algebraic variety. The leaf through 
    a general point of \(C\) is moreover rationally connected.
\end{proposition}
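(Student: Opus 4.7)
The plan is to construct, through each $p\in C$, an algebraic subvariety $Z_p$ that is invariant under $\mathcal{E}$ and has the full leaf dimension $r=\rk\mathcal{E}$; since the analytic leaf through $p$ is unique, $Z_p$ must then coincide with that leaf, proving algebraicity. I would produce $Z_p$ as the locus swept out by chains of rational curves tangent to $\mathcal{E}$ and passing through $p$, which simultaneously yields rational connectedness through a generic point of $C$.

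The construction of tangent rational curves is the deep step and is essentially the Bogomolov--McQuillan argument. First, spread $(B,\mathcal{E},C)$ over a finitely generated $\mathbb{Z}$-algebra $R\subseteq\C$ and reduce modulo a general closed point of $\Spec R$, working in characteristic $p\gg 0$. In characteristic $p$, the $p$-closure of $\mathcal{E}$---the smallest saturated subsheaf of $T_B$ containing $\mathcal{E}$ and stable under $p$-th powers of derivations---is algebraically integrable by Ekedahl's theorem, and so defines a rational fibration whose general fiber has tangent sheaf equal to this $p$-closure. The positivity of $\mathcal{E}|_C$ combined with a Miyaoka--Mori bend-and-break argument applied to high multiples of $C$ produces rational curves through points of $C$ lying inside fibers of this fibration. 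A careful analysis of the $p$-closure shows that, after lifting back to characteristic zero, these curves form a covering family tangent to $\mathcal{E}$ itself.

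Given such a family, let $Z_p$ denote the closure of the set of points joinable to $p$ by a chain of curves from the family. By the Campana--Kollár--Miyaoka--Mori construction of the MRC quotient, $Z_p$ is an algebraic subvariety and is rationally connected by construction. Invariance of $Z_p$ under $\mathcal{E}$ follows because at each smooth point $z\in Z_p$ the tangent space contains $\mathcal{E}_z$: the tangent directions to the deformations of the covering curves span precisely $\mathcal{E}$. Since $Z_p$ sits inside the unique analytic leaf through $p$ one has $\dim Z_p\leq r$, and a dimension count on the family of covering curves forces equality.

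The main obstacle is the descent argument between characteristics. The curves produced by bend-and-break are a priori tangent only to the $p$-closure of $\mathcal{E}$, which is strictly larger than $\mathcal{E}$; showing that they are genuinely tangent to $\mathcal{E}$ and that the family lifts from characteristic $p$ back to characteristic zero requires careful bookkeeping of the $p$-closure as $p$ varies, together with uniform control of how the positivity $\mathcal{E}|_C$ propagates through the iterated operation of taking $p$-powers of derivations. A secondary subtlety is verifying that the dimension of the variety swept by the family actually attains $r$, which uses the ampleness of $\mathcal{E}|_C$ in a quantitative rather than merely qualitative way.
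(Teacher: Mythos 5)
Your sketch is a valid outline of a proof, but it follows an essentially different route from the one the paper sketches, and it is worth understanding the trade-offs.

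The paper's argument is a \emph{formal-neighborhood section-growth} argument in the spirit of the original Bogomolov--McQuillan approach. It passes to the graph $\Gamma$ of $C \hookrightarrow B$ inside $C\times B$ so that the local analytic leaves fit into a fibration $W \to \Gamma$, and then bounds $h^0(\overline{W}, \mathcal{L}^{\otimes n})$ by $O(n^{r+1})$. This is done by embedding $H^0(\overline{W}, \mathcal{L}^{\otimes n})$ into sections on the formal completion $\widehat{W}$ of $\Gamma$, filtering by infinitesimal thickenings, and observing that the conormal pieces $\mathrm{Sym}^m N_{\Gamma/W}^\vee \otimes \mathcal{L}^{\otimes n}$ vanish for $m$ bounded linearly in $n$ because $N_{\Gamma/W}$ is generically isomorphic to the pullback of the ample $\mathcal{E}|_C$. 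The polynomial bound forces $\dim \overline{W} = r+1$, so $W$ is Zariski-dense in an algebraic family and its fibers (the leaves) are algebraic. No reduction mod $p$, no bend-and-break, no MRC quotient. Notably, the paper deliberately proves \emph{only} the algebraicity of the leaf and explicitly omits the rational connectedness, since it is not used elsewhere.

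Your approach is the characteristic-$p$ / bend-and-break route (in the style of Kebekus--Sol\'a Conde--Toma): spread out, reduce mod $p$, use Ekedahl's algebraic integrability of the $p$-closure, apply Miyaoka--Mori to produce rational curves tangent to the foliation, lift, and take the MRC/rational-chain quotient. This proves both algebraicity \emph{and} rational connectedness in one stroke, which is a genuine advantage if the stronger conclusion is needed. The disadvantage is precisely the point you flag yourself: the descent from the $p$-closure (which is a priori strictly bigger than $\mathcal{E}$ after reduction) back to $\mathcal{E}$ in characteristic zero, and the uniformity in $p$ required for the bend-and-break and for the lifting, are delicate and carry most of the technical weight. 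For the paper's purposes---where only the algebraicity of a single leaf through a point of $C$ is used---the formal-neighborhood argument is considerably more economical and avoids the entire characteristic-$p$ apparatus. Your sketch is correct as a high-level outline of the stronger theorem, but it is not the proof the paper gives, and the two steps you identify as "main obstacles" (descent from the $p$-closure and the dimension count spanning exactly $\mathcal{E}$) would each require a substantial argument to make rigorous.
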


The difficult part of the theorem is proving that the leaf \(Z\) is indeed rationally
connected. We do not need this part of the theorem, so we omit its proof. Instead,
we will sketch their argument to show that the leaf \(Z\) is algebraic.

\begin{proof}
To begin, notice that over the curve \(C\), the foliation has locally the structure
of a fibration \(Z_\Delta \to \Delta \subseteq C\). It is possible that when we try
to extend this structure globally, the leaves \(Z_\alpha\) intersect \(C\) at
more than one point. Thus, we instead consider the graph \(\Gamma\) of the inclusion
\(C \to B\). This sits as a curve inside the product \(C \times B\), and
\(\mathcal{E}\) naturally pulls back to a foliation on this product, with leaves
through \(\Gamma\) lying over the leaves of \(\mathcal{E}\) through \(C\).
Notice further that this construction has the effect of separating the leaves:
two leaves through \(\Gamma\) map to the same leaf \(Z \subseteq B\) if and only
if \(Z\) meets \(C\) twice. Thus we produce a fibration \(W \to \Gamma\).

We aim to show that the total space \(W\) is algebraic. This will show that each of
the individual fibers is algebraic, thereby making its images in \(B\) algebraic.
Let \(r\) be the rank of \(\mathcal{E}\); we need to show that the Zariski closure
\(\overline{W}\) of \(W\) has the expected dimension \(r+1\).
We proceed asymtotically: if \(\mathcal{L}\) is any line bundle on \(\overline{W}\), 
it suffices to show that
\begin{equation} \label{eq-asymtotic-bound}
    h^0(\overline{W}, \mathcal{L}^{\otimes n}) \leq C(\mathcal{L}) \cdot n^{r+1},
\end{equation}
for \(C(\mathcal{L})\) a constant depending only on \(\mathcal{L}\).

Take \(\widehat{W}\) to be the completion of \(W\) along \(\Gamma\).
The inclusion \(H^0(\overline{W}, \mathcal{L}) \hookrightarrow 
H^0(\widehat{W}, \mathcal{L})\) tells us that it is sufficient to show
\eqref{eq-asymtotic-bound} for \(\widehat{W}\). If \(W_m\) denotes the \(m\)-th
infinitesimal thickening of \(\overline{W}\), we have the exact sequence
\begin{equation} \label{eq-principal-parts}
    0 \to H^0(\Gamma, \text{Sym}^m N_{\Gamma / W}^\vee 
        \otimes \mathcal{L}^{\otimes n}) \to 
    H^0(W_{m+1}, \mathcal{L}^{\otimes n}) \to 
    H^0(W_{m}, \mathcal{L}^{\otimes n}).
\end{equation}
Here \(N_{\Gamma / W}\) is the normal bundle of \(\Gamma\). But notice that 
by construction, it admits a natural map \(\mathcal{F} \to N_{\Gamma / W}\)
from the pullback of \(\mathcal{E}\), which is generically an isomorphism.
By assumption, \(\mathcal{F}\) is positive, so there is some
\(m = C(\mathcal{L})' \cdot n\) above which the left term of
\eqref{eq-principal-parts} vanishes. Thus
\begin{align*}
    h^0(\widehat{W}, \mathcal{L}^{\otimes n}) 
    &= h^0(W_m, \mathcal{L}^{\otimes n}) \\
    &\leq \sum_{k=0}^m h^0(\Gamma, \text{Sym}^k N_{\Gamma / W}^\vee 
        \otimes \mathcal{L}^{\otimes n}) \\
    &\leq \sum_{k=0}^m C(\mathcal{L})'' \cdot n^r \\
    &= C(\mathcal{L})' \cdot C(\mathcal{L})'' \cdot n^{r+1}
\end{align*}
Setting \(C(\mathcal{L}) = C(\mathcal{L})' \cdot C(\mathcal{L})''\) gives the
required bound.
\end{proof}

\section{Fibrations of Maximal Variation}

We first begin by refining \Cref{deligne-structure-theorem} under the assumption
that $V_\Q$ is of maximal variation. Write $V_\C = \bigoplus_i B_i \otimes U_i$.
We claim that in fact, there is more than one isotypic component in this sum if and
only if it is of the complex form
$V_\C = (B \otimes U) \oplus (\overline{B} \otimes \overline{U})$. 
Indeed, \(V_\C\) comes from a real representation, so any isotypic component
of \(V_\C\) has a complex conjugate (possibly itself) also contained in \(V_\C\).
We may add any isotypic component to its complex conjugate to produce
$W_\C \subseteq V_\C$; this must arise from a real variation of Hodge structure
\(W \subseteq V_\R\). But \(V_\R\) is irreducible as a variation by 
\Cref{real-irreducibility}, so we get $W_\C = V_\C$ as needed.

In fact, we can say much more about the decomposition of \(V_\C\) into irreducibles.

\begin{lemma} \label{mv-rep-classification}
    Let $V_\C$ be a variation of Hodge structures arising from a maximal variation
    Lagrangian fibration. Then for some irreducible complex variation of Hodge
    structures $U$,
    \begin{equation*}
        V_\C = \begin{cases}
        U & \text{if } V_\C \text{ has real type}\\
        U \oplus \overline{U} & \text{if } V_\C \text{ has complex type}\\
        U \oplus U & \text{if } V_\C \text{ has quaternionic type}
    \end{cases}.
    \end{equation*}
\end{lemma}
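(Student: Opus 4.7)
The plan is to combine the real irreducibility of $V_\R$ (\Cref{real-irreducibility}) with Schur's lemma over $\R$ and Deligne's decomposition (\Cref{deligne-structure-theorem}) to pin down the multiplicities. As observed in the paragraph preceding the lemma, complex conjugation permutes the Deligne isotypic components, and real irreducibility forces its orbit to fill all of $V_\C$; thus $V_\C$ is either a single self-conjugate component (with $U \simeq \overline{U}$) or a conjugate pair $(B \otimes U) \oplus (\overline{B} \otimes \overline{U})$ with $U \not\simeq \overline{U}$. It remains to bound the multiplicity $\dim B$.

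For the bound, I would compute $\End_{\pi_1(B^\circ)}(V_\R)$ in two ways. By Schur's lemma applied to the irreducible real representation $V_\R$, the algebra $D := \End_{\pi_1(B^\circ)}(V_\R)$ is a finite-dimensional real division algebra, so by Frobenius $D \in \{\R, \C, \mathbb{H}\}$; base-changing yields $\End_{\pi_1(B^\circ)}(V_\C) = D \otimes_\R \C$, which is $\C$, $\C \oplus \C$, or $\mathrm{Mat}_2(\C)$ respectively. On the other hand, from Deligne's decomposition together with Schur over $\C$ we have $\End_{\pi_1(B^\circ)}(V_\C) = \bigoplus_i \mathrm{Mat}_{\dim B_i}(\C)$. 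Comparing the two descriptions: $D = \R$ forces one component of multiplicity one, giving $V_\C = U$; $D = \mathbb{H}$ forces one component of multiplicity two, giving $V_\C = U \oplus U$; and $D = \C$ forces two components each of multiplicity one, which must be interchanged by complex conjugation (otherwise $D$ would split as $\C \oplus \C$, contradicting $D = \C$), giving $V_\C = U \oplus \overline{U}$. These three cases match the real, quaternionic, and complex types of \Cref{rep-classification}.

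The main obstacle is simply the bookkeeping of matching the abstract endomorphism algebra against the Deligne decomposition and verifying the conjugation swap in the complex case. Maximal variation itself plays no direct role in the argument; only the real irreducibility of $V_\R$ from \Cref{real-irreducibility} is invoked, which holds for any compact Lagrangian fibration.
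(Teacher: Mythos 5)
Your overall strategy --- compare $\End_{\pi_1(B^\circ)}(V_\R)$ against Frobenius' classification of real division algebras, then match the base change against the isotypic decomposition --- is a clean alternative to the paper's level-counting argument, and once the inputs are in place it does recover the three cases cleanly. But there is a genuine gap in the input, and it is exactly where you dismiss the maximal variation hypothesis.

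You apply Schur's lemma to conclude that $D = \End_{\pi_1(B^\circ)}(V_\R)$ is a division algebra, which requires $V_\R$ to be irreducible as a $\pi_1(B^\circ)$-\emph{representation}. But \Cref{real-irreducibility} (Voisin) only gives irreducibility of $V_\R$ as a \emph{polarizable variation of Hodge structures}, and these are not equivalent in general. A sub-local-system of $V_\R$ corresponds (over $\C$, via \Cref{deligne-structure-theorem}) to a choice of subspaces $C_i \subseteq B_i$; it is a sub-\emph{variation} only when each $C_i$ is a Hodge substructure of $B_i$, which fails whenever some $B_i$ has positive level. So your claim that ``maximal variation itself plays no direct role'' is precisely backwards: maximal variation is what forces, via \cite[Lemma 4]{bak22}, every $U_i$ to have level one, hence every $B_i$ to have level zero, hence every subspace of $B_i$ to be Hodge. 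Only then does VHS-irreducibility upgrade to local-system irreducibility, after which your Schur/Frobenius computation is valid.

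The isotrivial case shows this is not a hypothetical concern. There the real-type decomposition (\Cref{isotrivial-rep-classification}(a)) is $V_\Q = H \otimes W$ with $H = H^1(E,\Q)$ of rank two and $W$ a $\Q$-local system irreducible over $\C$, so $V_\R \simeq W_\R^{\oplus 2}$ is \emph{reducible} as a local system even though it is irreducible as a variation (because $H$ is an irreducible weight-one Hodge structure). In that setting $\End_{\pi_1(B^\circ)}(V_\R) \simeq \mathrm{Mat}_2(\R)$ is not a division algebra, Frobenius does not apply, and your argument breaks. To repair your proof you would need to insert the paper's level argument as a preliminary step establishing that, under maximal variation, $V_\R$ is irreducible as a local system; after that, your endomorphism-algebra bookkeeping gives a slick finish.
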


\begin{proof}
Recall that $V_\Q$ is a variation of weight one Hodge structures. Since it is of
maximal variation, the irreducible factor \(U \subseteq V_\C\) has level one by 
\cite[Lemma 4]{bak22}. Observe additionally that \(V_\C\) has only a
\(V^{1,0}\) piece and a \(V^{0,1}\) piece, so it too is of level one.
But now notice that this severely limits the level of $B$.
Indeed, if $U$ and $V_\C = B \otimes U$ are both level one, then $B$ is of level zero.
This means that any vector subspace \(C \subseteq B\) is a Hodge subvariation.
Concretely, if \(V_\C\) has real type, \(V_\C = B \otimes W_\C\), then any 
one-dimensional subspace \(\C(0) \subseteq B\) yields a Hodge subvariation
\(\C(0) \otimes W_\C \subseteq V_\C\) coming from a real variation 
\(\R(0) \otimes W \subseteq V_\R\). But by \Cref{real-irreducibility}, 
\(W = V_\R\), so that \(B\) is one-dimensional.

The complex case is completely analogous; if
\(V_\C = (B \otimes U) \oplus (\overline{B} \otimes \overline{U})\), then 
both \(U\) and \(\overline{U}\) are level one, forcing \(B\) and \(\overline{B}\)
to be level zero. But now \(\C(0) \subseteq B\) and
\(\overline{\C}(0) \subseteq \overline{B}\) are Hodge substructures. This yields 
\(U \oplus \overline{U} \subseteq V_\C\), which is underlied by a real variation.
But again by \Cref{real-irreducibility}, the underlying real variation is \(V_\R\),
forcing \(\dim B = 1\).

Finally, when \(V_\C = B \otimes U\) is quaternionic, we know that \(V_\C\) 
is itself real, so that \(\dim B \geq 2\). Again, by the same argument, \(B\) 
has level zero, so any subspace of \(B\) yields a subvariation. This time, 
consider \(\C(0)^{\oplus 2} \subseteq B\). As before, this is a subvariation. 
Moreover, \(\C(0)^{\oplus 2} \otimes U \subseteq V_\C\) is a subvariation 
underlied by a real variation of Hodge structures. Again by 
\Cref{real-irreducibility}, we get equality \(B = \C(0)^{\oplus 2}\), 
so that \(V_\C \simeq U^{\oplus 2}\).
\end{proof}

With this description, we can compute some explicit examples. In particular,
we see that starting from an elliptic K3 surface, we can get a concrete understanding
of the associated \(\text{K3}^{[n]}\)-type Lagrangian fibration.

\begin{example}
Let \(X \to \mathbb{P}^1\) be an elliptic K3 surface
which is of maximal variation. By \Cref{mv-rep-classification}, the associated 
complex variation of Hodge structures \(V_\C = U\) is irreducible; otherwise, 
each of the irreducible factors would have to be one-dimensional, thereby failing 
to be of level one. Consider the associated \(\text{K3}^{[n]}\)-type Lagrangian 
fibration; let \((\mathbb{P}^n)^\circ\) be the complement of the discriminant of 
\(f^{[n]}\) in \(\mathbb{P}^n\).
Denote by \(\left(X^{[n]}\right)^\circ \to (\mathbb{P}^n)^\circ\)
the restriction of \(f^{[n]}\) to the preimage of \((\mathbb{P}^n)^\circ\).
Likewise, let \(U \subseteq \mathbb{P}^1\) be the complement of the discriminant of \(f\),
and \(X^\circ \to U\) be the restriction of \(f\) to \(f^{-1}(U)\).
We have a rational map \(U^n \to U^{(n)} \dashrightarrow \mathbb{P}^n\)
to the \(n\)-th symmetric power of \(U\); let
\((U^n)^\circ \subseteq (\mathbb{P}^1)^n\) be the locus on which it is regular. This 
gives us the diagram
\begin{equation*}
\begin{tikzcd}
	{(X^n)^\circ} \arrow{r} \arrow{d} & {\left(X^{[n]}\right)^\circ} \arrow[hook]{r} \arrow{d} & {X^{[n]}} \arrow{d} \\
	{(U^n)^\circ} \arrow{r} & {(\mathbb{P}^n)^\circ} \arrow[hook]{r} & {\mathbb{P}^n}
\end{tikzcd}
\end{equation*}

Notice that this yields a right-exact sequence of groups 
\begin{equation}
    \pi_1((U^n)^\circ) \to \pi_1((\mathbb{P}^n)^\circ) \to \mathfrak{S}_n \to 1.
\end{equation}
Take \(H = \pi_1((U^n)^\circ)\) and \(G = \pi_1((\mathbb{P}^n)^\circ)\), and 
consider the induction \(\text{Ind}_{G/H}(V_\C)\). Notice that 
by construction, this representation is expressable as \(B \otimes W\), where 
\(W\) is the representation associated to \(R^1 f_* \C_{X^\circ}\). Now notice that 
\(\sum_{p \in \mathfrak{S}_n} pW \) carries the structure of a 
\(G\)-subrepresentation of \(V_\C\). Moreover, \(\mathfrak{S}_n\) acts transitively 
on the irreducible factors of \(B \otimes W\), so if
\(\sum_{p \in \mathfrak{S}_n} pW \) split as a sum of distinct irreducible 
\(G\)-representations, there would have to be a non-zero map between them. 

Thus \(V_\C\) has a single isotypic component. For each irreducible factor \(W'\),
we may consider its restriction to \(H\). Since \(\mathfrak{S}_n\) acts transitively 
on the irreducible factors of \(B \otimes W\), there is some element of
\(\mathfrak{S}_n\) which acts by permuting the irreducible factors. But each \(W'\)
was a \(G\)-representation, so it must have been closed under the action of 
\(\mathfrak{S}_n\). Therefore \(W' = V_\C\) is a single irreducible factor. 
By \Cref{mv-rep-classification}, \(V_\C\) has real type.
In particular, we see that real type Lagrangian fibrations occur over projective
space of every dimension.
\end{example}

We will soon see every Lagrangian fibration of maximal variation is of real type.
Consider the map \(f: X \to B\) on three levels:
\begin{equation*}
\begin{tikzcd}
	{X^\circ} \arrow[hook]{r} \arrow{d} & {X_{B^{sm}}} \arrow[hook]{r} \arrow{d} & X \arrow{d}\\
	{B^\circ} \arrow[hook]{r}{i} & {B^{sm}} \arrow[hook]{r}{j} & B
\end{tikzcd}
\end{equation*}
Here \(B^{sm}\) is the non-singular locus of \(B\), and \(X_{B^{sm}} \to B^{sm}\) the 
pullback of \(f\). Likewise, \(B^\circ \subseteq B^{sm}\) is the complement of the
discriminant of \(X_{B^{sm}} \to B^{sm}\), and \(X^{\circ} \to B^\circ\) is the 
associated pullback.

Let $(\mathcal{V}, \nabla, F^{\bullet})$ be the flat bundle on \(B^\circ\)
associated to \(V\), equipped with the sub-bundle $F^1\mathcal{V}$. A theorem of 
Matsushita \cite[Theorem 1.2]{mats05} states that when \(B\) is smooth and 
\(X\) is projective, we have an isomorphism \(R^1 f_* \mathcal{O}_X \simeq \Omega_B^1\).
Schnell \cite[\S 4]{schnell23} vastly generalizes this result, showing that one only 
needs the total space to be holomorphic symplectic and K\"ahler. In particular,
\(R^1 f_* \mathcal{O}_{X_{B^{sm}}} \simeq \Omega_{B^{sm}}^1\).

If we further restrict to \(X^\circ\) and take duals, we get
\(T_{B^\circ} \simeq f_* \Omega_{X^\circ/B^\circ}^1 = F^1 \mathcal{V}.\)
In fact, Bakker and Schnell use Saito's theory of Hodge modules to extend this 
isomorphism over \(B^{sm}\).
\begin{lemma}[\cite{bak-sch23}, \S 2, (7)] \label{deligne-ext}
    Deligne's canonical extension lifts the isomorphism on \(B^\circ\) to
    \[T_{B^{sm}} \simeq \mathcal{V}^{>-1} \cap i_*(F^1 \mathcal{V}).\]
\end{lemma}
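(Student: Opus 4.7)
The plan is to extend the $B^\circ$-level identification $T_{B^\circ} \simeq F^1\mathcal{V}$ across the discriminant $B^{sm}\setminus B^\circ$ by combining two ingredients: Saito's theory of Hodge modules, which canonically extends the Hodge bundles on $\mathcal{V}$ to coherent subsheaves of Deligne's extension $\mathcal{V}^{>-1}$; and the cited Schnell refinement $R^1 f_* \mathcal{O}_{X_{B^{sm}}} \simeq \Omega^1_{B^{sm}}$ of Matsushita's theorem, which supplies the geometric content of the identification.

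First, I would invoke Saito's theory applied to the polarizable VHS $V$ on $B^\circ$. Its intermediate extension to $B^{sm}$ is a pure polarizable Hodge module whose underlying filtered $\mathcal{D}$-module $(\mathcal{M}, F_\bullet \mathcal{M})$ restricts on $B^\circ$ to $(\mathcal{V}, F^{-\bullet}\mathcal{V})$. The local monodromies of $V$ around the discriminant of a Lagrangian fibration are quasi-unipotent, and Saito's construction together with Schmid's nilpotent orbit theorem identifies the underlying $\mathcal{O}$-module $\mathcal{M}$ with Deligne's canonical extension $\mathcal{V}^{>-1}$ and each Hodge piece $F^p\mathcal{M}$ with $\mathcal{V}^{>-1}\cap i_*(F^p\mathcal{V})$.

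Second, I would compute the $F^1$-piece geometrically. Since $V$ is a weight-one polarized VHS, polarization identifies $F^1\mathcal{V}$ with the $\mathcal{O}$-linear dual of the Hodge quotient $\mathcal{V}/F^1\mathcal{V} \simeq R^1 f_*\mathcal{O}_{X^\circ}$. Applying Saito's direct image theorem to the projective morphism $f\colon X_{B^{sm}} \to B^{sm}$, the self-duality of pure polarizable Hodge modules extends this identification across the boundary: the Hodge-module avatar of the $F^1$-piece is dual to the Hodge-module avatar of the $F^0$-quotient, and the latter extends $R^1 f_*\mathcal{O}_{X^\circ}$ to $R^1 f_*\mathcal{O}_{X_{B^{sm}}}$. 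Combining with Schnell's isomorphism yields
\[
\mathcal{V}^{>-1} \cap i_*(F^1\mathcal{V}) \;\simeq\; \bigl(R^1 f_* \mathcal{O}_{X_{B^{sm}}}\bigr)^\vee \;\simeq\; \bigl(\Omega^1_{B^{sm}}\bigr)^\vee \;\simeq\; T_{B^{sm}}.
\]

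The main obstacle is verifying that all the dualities commute with extension across the boundary: in particular, that the Serre-type pairing between $F^1\mathcal{V}$ and $\mathcal{V}/F^1\mathcal{V}$ on smooth fibers globalizes to a pairing of Saito's Hodge modules on $B^{sm}$ inducing the stated identification. This is a nontrivial but standard consequence of the self-duality of polarizable pure Hodge modules combined with Saito's direct image formalism; once this is in place, the coherence of $\mathcal{V}^{>-1}\cap i_*F^1\mathcal{V}$ and its compatibility with the inclusion $i$ reduce to the local description of the canonical extension in terms of Schmid's limiting mixed Hodge structure.
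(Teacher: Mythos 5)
The paper does not prove this lemma; it is a verbatim citation of \cite{bak-sch23}, \S 2, (7), so there is no ``paper's proof'' to compare against. What follows is an assessment of your reconstruction on its own terms.

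The strategy --- extend Matsushita's identification $T_{B^\circ}\simeq F^1\mathcal{V}$ across the discriminant by passing to Saito's Hodge module picture, then dualize against the Kolla\'r/Saito-style extension of $R^1f_*\mathcal{O}$ and plug in Schnell's isomorphism $R^1 f_* \mathcal{O}_{X_{B^{sm}}} \simeq \Omega^1_{B^{sm}}$ --- is the right circle of ideas and is faithful to how Bakker--Schnell argue. However, one assertion is false as written: the underlying $\mathcal{O}$-module $\mathcal{M}$ of the intermediate-extension Hodge module is \emph{not} Deligne's canonical extension $\mathcal{V}^{>-1}$. The minimal extension is $\mathcal{D}$-coherent but not $\mathcal{O}$-coherent (it is generated from $\mathcal{V}^{>-1}$ by repeated application of vector fields), whereas $\mathcal{V}^{>-1}$ is a locally free $\mathcal{O}$-module; they differ except in the trivial-monodromy case. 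What is true, and what you actually need, is that for $p$ in the relevant range $F_p\mathcal{M} = \mathcal{V}^{>-1}\cap i_*(F^{-p}\mathcal{V})$, and that these intersections are locally free by Schmid's nilpotent orbit theorem (together with the Cattani--Kaplan--Schmid several-variables version). Your argument only uses the $F^1$ piece, so this misstatement does not propagate, but it should be corrected.

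A second point deserves more care than you give it: extending the polarization-induced duality $F^1\mathcal{V}\simeq(\mathcal{V}/F^1\mathcal{V})^\vee$ across $D = B^{sm}\smallsetminus B^\circ$. The pairing $q$ on $\mathcal{V}$ extends to a perfect pairing between the canonical extensions with \emph{complementary} eigenvalue ranges, namely $\mathcal{V}^{>-1}\otimes\mathcal{V}^{\geq 0}\to\mathcal{O}_{B^{sm}}$. When the local monodromy is unipotent these coincide and the duality is immediate, but for a Lagrangian fibration the local monodromy eigenvalues can be nontrivial roots of unity (indeed for isotrivial fibrations they necessarily are, as in \eqref{eq-possible-eigenvalues}), so ``the self-duality extends'' is not automatic and you need to track which canonical extension of the Hodge quotient is actually recovering $R^1 f_*\mathcal{O}_{X_{B^{sm}}}$. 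This is precisely the content of the Koll\'ar/Saito theorem you invoke, so the gap is one of precision rather than of strategy, but your phrase ``a nontrivial but standard consequence'' is doing quite a lot of work at the single most delicate point of the argument.
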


Now we are finally prepared to refine \Cref{mv-rep-classification}. While 
dimension-counting has narrowed down the types of isotypic decomposition which 
may occur to only three cases, it has failed to rule out the possibility
that \(V_\C\) may be of complex type or quaternionic type.

\begin{theorem} \label{mv-real-type}
    The monodromy representation associated to any maximal variation Lagrangian
    fibration is of real type.
\end{theorem}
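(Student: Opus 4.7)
The plan is to proceed by contradiction, using \Cref{mv-rep-classification} to reduce to ruling out the quaternionic and complex cases. Throughout, the uniform constraint I will exploit is $h^0(B^\circ, \bigwedge^2 V_\C) = 1$, established exactly as in the proof of \Cref{real-irreducibility}: Matsushita's rank-one bound on the restriction $H^2(X, \C) \to H^2(X_b, \C)$ together with Deligne's global invariant cycles theorem permits at most one monodromy-invariant two-form on $V_\C$, and the polarization $q$ already saturates this bound.

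For the quaternionic case $V_\C = U \oplus U$, I expand
\[
    \bigwedge^2 V_\C = \bigwedge^2 U \oplus (U \otimes U) \oplus \bigwedge^2 U.
\]
By \Cref{proof-of-rep-classification}, $U$ quaternionic forces $h^0(\bigwedge^2 U) = 1$. The resulting alternating form realises $U \simeq U^\vee$, whence Schur's lemma gives $h^0(U \otimes U) = \dim \End_{\pi_1}(U) = 1$. Thus $h^0(\bigwedge^2 V_\C) = 3$, contradicting the uniform bound.

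For the complex case $V_\C = U \oplus \overline{U}$ with $U \not\simeq \overline{U}$, I apply Krull--Schmidt to the polarization-induced self-duality $V_\C \simeq V_\C^\vee$ to obtain two sub-cases: either $U \simeq U^\vee$ or $U \simeq \overline{U}^\vee$. In the former, $U$ carries an invariant bilinear form while $\Hom_{\pi_1}(U^\vee, \overline{U}) = \Hom_{\pi_1}(U, \overline{U}) = 0$, so the analogous computation yields $h^0(\bigwedge^2 V_\C) \in \{0, 2\}$, again contradicting the bound. The remaining possibility $U \simeq \overline{U}^\vee$ realises $V_\C$ as a symplectic hyperbolic pair $U \oplus U^\vee$ with both summands Lagrangian; this is consistent with the dimension count and is the core difficulty.

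To eliminate this last sub-case I would turn to the algebraic foliation machinery of Section~3. By \Cref{deligne-ext}, the decomposition $F^1 V_\C = F^1 U \oplus F^1 U^\vee$ lifts to a splitting of $T_{B^{sm}}$ by reflexive subsheaves, each of which should define an algebraic foliation via the horizontal structure coming from the sub-local-system. Because $B$ has Picard rank one and $-K_B$ is $\Q$-ample, the determinants of the two summands are rational multiples of $-K_B$ summing to $-K_B$, so at least one is $\Q$-ample. \Cref{bog-mcq-main-theorem} then produces a rationally connected algebraic leaf, on which a Griffiths-style rigidity argument forces the period map to be constant, contradicting generic immersiveness under maximal variation. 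The main obstacle is verifying integrability of these sub-bundles under the Matsushita--Schnell identification, together with ensuring the algebraic leaves interact cleanly with the discriminant $B \setminus B^\circ$ so that the rigidity argument applies on a sufficiently general leaf.
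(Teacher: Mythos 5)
Your proposal follows the same overall skeleton as the paper — use \Cref{mv-rep-classification} to reduce to three cases, then rule out the quaternionic and complex ones — but the mechanisms you deploy differ substantially, and the complex case is left with an unresolved gap.

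For the quaternionic case your argument is genuinely different from, and simpler than, the paper's. The paper invokes \Cref{deligne-ext} to exhibit $\Omega_B^{[1]}$ as a ``square,'' forcing $h^{1,1}(B)$ to be even, and then contradicts $H^2(B,\Q)\simeq\Q$. You instead expand $\bigwedge^2(U\oplus U)=\bigwedge^2 U\oplus(U\otimes U)\oplus\bigwedge^2 U$ and count invariants: $h^0(\bigwedge^2 U)=1$ by \Cref{proof-of-rep-classification}, and $h^0(U\otimes U)=\dim\End_{\pi_1}(U)=1$ by Schur once the alternating form gives $U\simeq U^\vee$, so $h^0(\bigwedge^2 V_\C)=3>1$. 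That is correct, self-contained, and uses only the same invariant-cycle input that already underlies \Cref{real-irreducibility}; it is notably the argument the paper itself uses for the quaternionic case in the isotrivial setting (\Cref{isotrivial-rep-classification}), so your observation that it also works under maximal variation is a clean simplification.

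For the complex case, your further Krull--Schmidt dichotomy and the $h^0$-count eliminating the sub-case $U\simeq U^\vee$ are both fine, and identifying $U\simeq\overline{U}^\vee$ as the residual ``hyperbolic'' case is exactly where the difficulty lives. However, your route to a contradiction there does not go through. You propose to take a rationally connected algebraic leaf from \Cref{bog-mcq-main-theorem} and apply ``Griffiths-style rigidity'' to conclude the period map is \emph{constant} on it. Two problems. First, the paper deliberately does not use rational connectedness — it explicitly remarks that only the algebraicity of the leaf is needed — and instead derives the contradiction by noting that along the leaf $Z$ the tangent space is $\mathcal{F}\subseteq\mathcal{E}_1$, which is perpendicular to $\mathcal{E}_2$, so $d\mathcal{P}$ kills $\mathcal{E}_2|_Z$ and hence \emph{drops rank}; constancy is neither claimed nor needed. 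Second, and more seriously, the rigidity argument you gesture at is not available as stated: the period map is only defined on $B^\circ$, the leaf $Z\subseteq B^{sm}$ and the rational curves covering it need not avoid the discriminant $B^{sm}\setminus B^\circ$, and punctured rational curves are not hyperbolic, so one cannot directly invoke the nonexistence of nonconstant maps from $\mathbb{P}^1$ into $\leftQ{\mathcal{D}}{\Gamma}$. You flag ``ensuring the algebraic leaves interact cleanly with the discriminant'' as an obstacle, but this is precisely the part that would need to be an argument rather than an aspiration; the paper's infinitesimal drop-of-rank approach sidesteps it entirely. (The integrability concern you also raise is already settled in the paper by citing Lemma~3.3 of Bakker--Schnell.)
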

\begin{proof}
Suppose that $V_\C$ were quaternionic, i.e. $V_\C = U^{\oplus 2}$, for $U$ complex
irreducible. Take \(\mathcal{U}\) to be the flat bundle on \(B^\circ\) associated to
\(U\); we naturally have \(\mathcal{V} = \mathcal{U}^{\oplus 2}\). Since 
this is a decomposition on the level of variations of Hodge structure, 
\(T_{B^\circ} \simeq F^1 \mathcal{V} = \left(F^1 \mathcal{U}\right)^{\oplus 2}\).
But now we may apply \Cref{deligne-ext} to get 
\begin{equation}
    T_{B^{sm}} \simeq 
    \left(\mathcal{U}^{>-1} \cap i_*(F^1 \mathcal{U})\right)^{\oplus 2}.
\end{equation}
In particular, the sheaf of reflexive 1-forms
\(j_* (T_{B^{sm}}^\vee) = \Omega_B^{[1]}\) on \(B\) is a square. Thus the space 
\(H^1(B, \Omega_B^{[1]})\) is even-dimensional. But by 
\cite[Corollary 2.3]{bak-lenh20}, \(H^{p,q}(B) \simeq H^q(X, \Omega_B^{[p]})\)
for \(p + q \leq 2\). 
Since \(H^0(B, \Omega_B^{[2]})\) contains the image of \(\overline{\sigma}\) under 
the identification given by Matsushita's theorem \cite{schnell23}, we know that 
\(H^2(B, \C)\) is non-empty. Thus it has dimension at least 2.
But \(B\) is known to have the same rational cohomology
as \(\mathbb{P}^n\) (see, for instance, \cite[Theorem 2.1]{huy22}). As 
\(H^2(\mathbb{P}^n, \Q) \simeq \Q\), this is a contradiction.

Now we show that $V_\C$ is not complex. Once again, suppose for contradiction 
that it were, let $V_\C = U \oplus \overline{U}$, for $U$ an irreducible
$\C$-local system. Then on the level of flat bundles, $\mathcal{V}$ splits as
$\mathcal{U}_1 \oplus \mathcal{U}_2$. Each $\mathcal{U}_i$ comes equipped with a
sub-bundle $F^1\mathcal{V}_i$, which respects the Hodge filtration
on $\mathcal{V}$, i.e.
\begin{equation} \label{eq-tangent-bundle-local}
    T_{B^\circ} \simeq F^1 \mathcal{V} = F^1 \mathcal{U}_1 \oplus F^1 \mathcal{U}_2.
\end{equation}
Now apply \Cref{deligne-ext} to get a splitting of \(T_{B^{sm}}\) as
\begin{equation}
    T_{B^{sm}} \simeq \left(\mathcal{U}_1^{>-1} \cap i_*(F^1 \mathcal{U}_1) \right)
    \oplus \left(\mathcal{U}_2^{>-1} \cap i_*(F^1 \mathcal{U}_2)\right)
    =: \mathcal{E}_1 \oplus \mathcal{E}_2.
\end{equation}
Note that both \(\mathcal{E}_1\) and \(\mathcal{E}_2\) are foliations. Their 
integrability follows from \cite[Lemma 3.3]{bak-sch23}. We will now show that 
at any general point of \(B^{sm}\), at least one of the two foliations admits an
algebraically integrable subfoliation. Such foliations are induced by morphisms 
\(\pi: B \to W\), which will produce a non-trivial \(\Q\)-subvariation 
of \(V\), contradicting \Cref{real-irreducibility}.

Note that \(\det (j_* \mathcal{E}_1) \otimes \det (j_* \mathcal{E}_2) = 
\omega_{B}^{\vee}\) is \(\Q\)-ample. By \cite[Theorem 1.1]{mats15}, \(B\) has 
Picard rank 1, so one of the two summands must therefore have ample determinant.
In other words, we may assume without loss of generality that \(j_* \mathcal{E}_1\)
is a \(\Q\)-Fano foliation. We will use the argument of
\cite[Corollary 2.12]{araujo18} to show that there exists
\(\mathcal{F} \subseteq j_* \mathcal{E}_1\) an algebraically integrable subfoliation.

Fix an ample class \(H\) on \(B\), and consider the Harder-Narasimhan filtration of
\(j_* \mathcal{E}_1\) induced by the corresponding slope \(\mu_H\):
\begin{equation} \label{eq-hn-filtration}
    0 \subsetneq \mathcal{F} = \mathcal{F}_1 \subsetneq \cdots \subsetneq 
    \mathcal{F}_k = j_*\mathcal{E}_1, \qquad \mu(\mathcal{F}) > 
    \mu(\mathcal{F}_2 / \mathcal{F}_1) > \cdots > 
    \mu(\mathcal{F}_{k} / \mathcal{F}_{k-1}).
\end{equation}
We claim that \(\mathcal{F}\) is a subfoliation of \(j_* \mathcal{E}_1\). Indeed, 
given a local section \(v \in \mathcal{F}(U)\), we may consider the image of the 
Lie bracket \([v, -]: \mathcal{F}(U) \to \mathcal{E}_1(U)\). This is not a morphism
of \(\mathcal{O}_B(U)\)-modules, since for any \(f \in \mathcal{O}_B(U)\) and 
\(w \in \mathcal{F}(U)\) we have
\begin{equation} \label{eq-leibniz}
    [v, fw] = \nabla_v(f)\cdot w + f[v, w].
\end{equation}
However, we may compose this map with the projection 
\(\mathcal{E}_1 \to \mathcal{E}_1 / \mathcal{F}\), in which case the first term of 
\eqref{eq-leibniz} dies. This composition is now \(\mathcal{O}_B\)-linear. Moreover,
since the choice of local section \(v\) was arbitrary, we in reality have a morphism 
of \(\mathcal{O}_B\)-modules \([-,-]: \mathcal{F}^{\otimes 2} \to 
\mathcal{E}_1 / \mathcal{F}\). Suppose this morphism was non-zero; then 
there would need to be some graded piece of the Harder-Narasimhan filtration on which
the Lie bracket had non-zero image, say 
\begin{equation} \label{eq-hn-tensor}
    \mathcal{F}^{\otimes 2} \to \mathcal{E}_1 / \mathcal{F} \to 
    \mathcal{F}_i / \mathcal{F}_{i-1}.
\end{equation}
By assumption, both \(\mathcal{F}\) and \(\mathcal{F}_i / \mathcal{F}_{i-1}\) are 
semistable with the slope of the former strictly greater than the slope of the latter.
But then there are no non-zero maps 
\(\mathcal{F} \to \mathcal{F}_i / \mathcal{F}_{i-1}\). But in characteristic zero,
the tensor product of stable sheaves is semistable; see, for instance, 
\cite[\S 2.1.2]{langer05}. Thus \(\mathcal{F}^{\otimes 2}\) is also semi-stable of slope
\(2 \mu_H(\mathcal{F})\). In particular,
it too admits no non-zero maps to \(\mathcal{F}_i / \mathcal{F}_{i-1}\),
proving \([\mathcal{F}, \mathcal{F}] \subseteq \mathcal{F}\).

Now construct a general complete intersection curve \(C \subseteq B\) avoiding 
the singular loci of \(B\) and \(\mathcal{F}\) as follows: for 
\(m_1, \dots, m_{n-1}\) sufficiently large integers, let \(H_i\) be a general
member of the complete linear series \(|m_i H|\), and set 
\(C := H_1 \cap \cdots \cap H_{n-1}\). By the Mehta-Ramanathan Theorem 
\cite{mehta-ram84}, the restriction of \eqref{eq-hn-filtration} to \(C\)
is still the Harder-Narasimhan filtration of \(j_* \mathcal{E}_1|_C\).
In particular, we see that every filtered piece \(\mathcal{F}_i|_C\) is locally free.
Since \(j_* \mathcal{E}_1\) was \(\Q\)-Fano, we have \(\mu_H(j_* \mathcal{E}_1) > 0\),
so it has positive degree on \(C\). Consequently, \(\mathcal{F}|_C\) also has positive
degree. By a theorem of Hartshorne \cite[Theorem 2.4]{hart71}, \(\mathcal{F}|_C\)
is ample if and only if every quotient bundle of \(\mathcal{F}|_C\) has positive
degree. But \(\mathcal{F}|_C\) is the maximal destabilizing subsheaf, so it admits 
no nontrivial quotients. Thus, by \Cref{bog-mcq-main-theorem}, 
every leaf of \(\mathcal{F}\) through \(C\) is algebraic. Now because \(C\) was general,
the general leaf of \(\mathcal{F}\) is algebraic,
or equivalently, \(\mathcal{F}\) is algebraically integrable.

Now, by \cite[Theorem 2.1.9]{chen23}, there exists a morphism \(\pi: B \to W\)
for \(W\) a smooth variety inducing
\(\mathcal{F}\), i.e. \(\mathcal{F} = \ker(d\pi)\). Consider the 
variation of \(\Q\)-Hodge structures \(\pi^*\pi_* V\) on \(B^\circ\). Fiberwise, 
\((\pi_* V)_w = H^0(B_w^\circ, V)\). Hence, by the theorem of the fixed part
\cite{schmid73}, \(\pi^*\pi_* V \subseteq V\) is a \(\Q\)-subvariation. Now, since \(V\) is 
irreducible by \Cref{real-irreducibility}, we must either have equality or vanishing.
If \(\pi^*\pi_* V = V\), the period map 
\(\mathcal{P}: B^\circ \to \leftQ{\mathcal{D}}{\Gamma}\) would factor through 
a Zariski open subset of \(W\). Since \(\dim W < \dim B\), this contradicts the 
maximal variation assumption.

Thus we must have \(\pi^* \pi_* V = 0\). But for a general point \(w \in W\) we have
the fiberwise containment
\begin{equation}
    (\pi_* \mathcal{V})_w = H^0(B_w^\circ, \mathcal{V}) \supseteq H^0(B_w^\circ, \mathcal{U}_2) \neq 0.
\end{equation}
The non-vanishing comes from the fact that \(\mathcal{U}_2\) is fixed along the leaves of 
\(\mathcal{F} \subseteq j_* \mathcal{E}_1\), by construction. 
\end{proof}

\begin{corollary} \label{complex-irreducibility}
    If \(V_\C\) is of maximal variation, it is irreducible as a \(\C\)-local system.
\end{corollary}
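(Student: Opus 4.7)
The plan is to combine \Cref{mv-rep-classification} with \Cref{mv-real-type} and read off the result. Recall that \Cref{mv-rep-classification} narrowed the possible shapes of $V_\C$ to exactly three: $U$, $U \oplus U$, or $U \oplus \overline{U}$, corresponding (in the language of \Cref{rep-classification}) to the real, quaternionic, and complex type decompositions respectively. The proof of \Cref{mv-rep-classification} in fact pins down the multiplicity space $B$ in each case by forcing $\dim B = 1$ in the real and complex types and $\dim B = 2$ in the quaternionic type, using \Cref{real-irreducibility} to rule out proper Hodge subvariations of $B$ coming from any subspace $\C(0) \subseteq B$.

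With this dictionary in hand, the corollary reduces to excluding the two non-real cases, which is precisely the content of \Cref{mv-real-type}. The complex case $V_\C = U \oplus \overline{U}$ and the quaternionic case $V_\C = U \oplus U$ are both ruled out there (the former by an algebraic foliation argument via \Cref{bog-mcq-main-theorem} on the Hodge-theoretic splitting $T_{B^{sm}} \simeq \mathcal{E}_1 \oplus \mathcal{E}_2$, and the latter by a parity obstruction on $\dim H^1(B,\Omega_B^{[1]})$ coming from the $\mathbb{P}^n$-like cohomology of $B$). Only the real-type case $V_\C = U$ survives, and this is by definition an irreducible $\C$-local system. There is no real obstacle here: the two preceding results were set up precisely to make this corollary immediate, so the only task is to cite them in the right order and note that a single isotypic factor of multiplicity one is irreducible.
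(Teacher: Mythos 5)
Your proposal is correct and matches the paper's proof, which simply cites \Cref{mv-real-type} and \Cref{mv-rep-classification} and concludes immediately. Your additional recapitulation of how those two lemmas were proved is accurate but not needed for the corollary itself.
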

\begin{proof}
This follows immediately from \Cref{mv-real-type} and 
\Cref{mv-rep-classification}.
\end{proof}

\section{Isotrivial Fibrations}

To more cleanly deal with the case where \(f: X \to B\) is isotrivial, we introduce
a definition.

\begin{definition}
    Let \(\Gamma\) be a finitely generated group, \(K \subseteq \C\) a field.
    A \(K\)-Hodge representation of \(\Gamma\) is the data of a 
    polarized \(K\)-Hodge structure \((V, \bigoplus_{p+q=w} V_\C^{p,q}, q)\) and 
    a representation \(\rho: \Gamma \to \GL(V, q)\) acting by Hodge automorphisms.
\end{definition}

The notion of Hodge representation is useful precisely because it captures the 
data of isotrivial variations of Hodge structure. In particular,
since \(f\) is isotrivial, we may fix a basepoint \(b \in B^\circ\) and note 
that \(V_\Q\) has the structure of a weight 1 Hodge representation 
\(((V_\Q)_b, (V_\C)_b^{1,0} \oplus (V_\C)_b^{0,1}, q, \rho)\)
with respect to the monodromy representation
\[\rho: \Gamma = \pi_1(B^\circ, b) \to GL(V_\Q, q).\]
The choice of the basepoint does not matter; such a Hodge representation 
completely captures the data of \(V_\Q\). Hence, we will abuse notation by 
dropping the subscript, identifying \(V_\Q\) as a \(\Q\)-Hodge representaion
of \(\pi_1(B^\circ)\).

Now we recover the stucture theorem of Kim, Laza, and Martin.
\begin{theorem}[\cite{kim-laza-martin23}, Theorem 1.3] \label{elliptic-curve-theorem}
    There exists an elliptic curve \(E\) along with an isomorphism of \(\Q\)-Hodge 
    structures \(H^1(E, \Q)^{\oplus n} \to V_\Q\).
\end{theorem}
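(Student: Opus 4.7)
The plan is to extract an imaginary quadratic field from the endomorphism algebra of the fiber $V_\Q$ and then use it to produce the desired CM structure. To begin, since $f$ is isotrivial, fix a basepoint $b \in B^\circ$ and regard $V_\Q$ as a polarized weight-one $\Q$-Hodge representation of $\Gamma := \pi_1(B^\circ, b)$, identified with $H^1(A, \Q)$ for $A$ the abelian variety fiber. Because $\Gamma$ acts by polarized automorphisms of $A$, and $\Aut(A, \lambda)$ is finite for any polarized abelian variety, the image of $\Gamma$ is a finite group. Moreover isotriviality implies that the Weil operator $C$ on $V_\R$ is $\Gamma$-equivariant, so $C \in \End_\Gamma(V_\R)$ and in particular $\End_\Gamma(V_\R)$ contains $\R[C] \cong \C$.

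The first main step is to determine the type of $V_\R$ as a $\Gamma$-representation. By \Cref{real-irreducibility} it is $\Gamma$-irreducible, and hence falls into one of the three cases of \Cref{rep-classification}. Real type is immediate to exclude, because $V_\C$ would then be $\Gamma$-irreducible, contradicting the existence of the proper $\Gamma$-invariant subspace $V^{1,0}$. To rule out quaternionic type I would use the bound $\dim_\R (\bigwedge^2 V_\R)^\Gamma \leq 1$ established in the proof of \Cref{real-irreducibility}. In the quaternionic case $V_\C \cong U^{\oplus 2}$, and \Cref{proof-of-rep-classification} gives a nonzero $\Gamma$-invariant in $\bigwedge^2 U$; combining this with $(\mathrm{Sym}^2 U)^\Gamma = 0$ and the decomposition $\bigwedge^2 V_\C = (\bigwedge^2 U)^{\oplus 3} \oplus \mathrm{Sym}^2 U$ yields $\dim_\C (\bigwedge^2 V_\C)^\Gamma = 3$, contradicting the Matsushita bound. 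Hence $V_\R$ is of complex type, $V_\C = U \oplus \overline U$ with $U \not\cong \overline U$, and $D := \End_\Gamma(V_\Q)$ satisfies $D \otimes_\Q \R \cong \C$, so $D = K$ is an imaginary quadratic field.

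The next step is to promote $K$ to an algebra of Hodge endomorphisms. Pick $\tau \in K$ with $\tau^2 = -d$ for $d \in \Q_{>0}$; inside $K \otimes_\Q \R \cong \R[C]$, a direct computation shows $\tau \otimes 1 = \pm \sqrt{d}\,C$, so $\tau$ preserves the Hodge decomposition. Thus $K \subseteq \End_{\mathrm{HS}}(V_\Q) = \End^0(A)$, and $V_\Q$ becomes a $K$-vector space of $K$-dimension $n$ on which $K$ acts by Hodge endomorphisms. The decomposition $K \otimes_\Q \C = \C \oplus \C$ corresponds to the two embeddings $\sigma, \bar\sigma : K \hookrightarrow \C$; after choosing $\tau$ and $\sigma$ so that $\sigma(\tau) = i\sqrt{d}$, the equality $\tau|_{V^{1,0}} = i\sqrt{d} \cdot \id$ identifies $V^{1,0}$ with the $\sigma$-isotypic component of $V_\C$.

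Finally, I would fix an elliptic curve $E$ with $\End^0(E) = K$, so that $H^1(E, \Q)$ is a one-dimensional $K$-module satisfying $E^{1,0} = H^1(E,\Q)_\sigma$. Any $K$-linear isomorphism $H^1(E, \Q)^{\oplus n} \to V_\Q$—which exists because both sides are $K$-vector spaces of dimension $n$—automatically preserves the Hodge decomposition, since both sides split in the same way into $\sigma$- and $\bar\sigma$-isotypic components matching the $(1,0)$- and $(0,1)$-pieces. The main obstacle is the rule-out of quaternionic type in Step 2; once that is handled, the existence of $K$ and the CM-type identification are essentially forced, and constructing the isomorphism is formal.
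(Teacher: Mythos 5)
Your proposal has a genuine gap at the step where you rule out real type, and the gap is fatal: real type actually occurs in the isotrivial case. The claim ``Real type is immediate to exclude, because $V_\C$ would then be $\Gamma$-irreducible'' silently assumes that $V_\R$ is irreducible as a $\Gamma$-\emph{representation}. But Voisin's \Cref{real-irreducibility} only gives irreducibility as a polarizable \emph{variation of Hodge structures}, which is strictly weaker; a sub-VHS must respect the Hodge filtration, not just the monodromy. In the isotrivial real case, \Cref{isotrivial-rep-classification}(a) gives $V_\Q = H \otimes W$ with $\dim H = 2$, $\Gamma$ acting trivially on $H$, so as a $\Gamma$-representation $V_\C \cong W_\C^{\oplus 2}$ is manifestly reducible, and $V^{1,0} = H^{1,0} \otimes W_\C$ is an invariant subspace with no contradiction. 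The reducible decomposition of $V_\R$ as a representation is not a VHS decomposition because $H$ carries a weight-one Hodge structure with no rank-one real sub-Hodge-structure. \Cref{example-Kummer-real} (the Kummer surface over $\mathbb{P}^1$ with local monodromy $-\mathrm{id}$) is an explicit real-type isotrivial Lagrangian fibration, and in that example $E$ can be taken to be a generic non-CM elliptic curve, which your argument would forbid.

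This undermines the rest of the argument: in the real type case $\End_\Gamma(V_\Q)$ is not a division algebra (it is a matrix algebra over $\End_\Gamma(W)$), so the imaginary quadratic field $K$ does not exist, and the $K$-module structure you want to produce on $V_\Q$ is unavailable. The Weil-operator observation ($C \in \End_\Gamma(V_\R)$) is correct and nice, but without $\Gamma$-irreducibility of $V_\R$ it does not pin down the endomorphism algebra. Your rule-out of quaternionic type, by contrast, is correct and essentially the same computation the paper uses in \Cref{isotrivial-rep-classification}(c). The paper's actual proof of \Cref{elliptic-curve-theorem} takes a local route instead: it examines the local monodromy $T_b$ around the discriminant divisor, uses \cite[Lemma 3.6]{bak-sch23} to see $T_b$ has exactly two nontrivial conjugate eigenvalues drawn from a short list, takes $H$ to be the $2$-dimensional orthogonal complement of the $T_b$-fixed Hodge substructure, and then argues that irreducibility of $V_\Q$ as a Hodge representation forces a single isotypic component as a Hodge structure, giving $V_\Q \simeq H^{\oplus n}$. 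That argument works uniformly whether or not $E$ has CM, which is exactly the flexibility your proposal lacks.
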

\begin{remark}
Note that we do not claim this map to be an isomorphism of Hodge 
representations; indeed, we will see that \(H^1(E, \Q)^{\oplus n}\) only admits 
a compatible Hodge representation strucure  
locally. Phrased in terms of isotrivial variations,
this is an identification of Hodge structures between a general fiber of 
\(V_\Q\) and \(H^1(E, \Q)^{\oplus n}\). Equivalently, the smooth fibers of
\(f: X \to B\) are isogeneous to \(E^n\).
\end{remark}
\begin{proof}
Let \(D = B^{sm} \smallsetminus B^{\circ}\). We consider the local monodromy of 
\(V_\Q\) in a small tubular neighborhood \(U \subseteq B^{sm}\) of \(D\) where
\(D \cap U\) is smooth. Set \(U^\circ = U \cap B^\circ\), and choose a basepoint
\(b \in U^\circ\). Let \(T_b\) denote the matrix representing the local monodromy
operator in \(U^\circ\). 
By \cite[Lemma 3.6]{bak-sch23}, \(T_b\) has only two
non-trivial conjugate eigenvalues \(\lambda, \overline{\lambda}\) with
\begin{equation} \label{eq-possible-eigenvalues}
    (\lambda, \overline{\lambda}) \in \{(\zeta_6, \overline{\zeta}_6),
    (\zeta_4, \overline{\zeta}_4), (\zeta_3, \overline{\zeta}_3), (-1, -1)\}.
\end{equation}
We have a natural Hodge substructure \((V_\Q)^{T_b} \subseteq V_\Q\) 
given by the \(1\)-eigenspace of \(T_b\); denote by \(H\) its orthogonal 
complement with respect to the Hermitian form on \(V_\Q\). \(H\) too 
is a Hodge substructure, and because \(H\) is two-dimensional and weight one,
it is irreducible. Notice that there exists an isotrivial family of elliptic 
curves \(\mathcal{E} \to U^\circ\) with fiber \(E\), such that we have an 
isomorphism of \(\pi_1(U^\circ)\)-Hodge representations \(H^1(E, \Q) \simeq H\).

There are no Hodge morphisms between different isotypic 
components of a Hodge structure, so \(V_\Q\) has multiple isotypic components 
as a Hodge structure if and only if it has multiple isotypic components as a 
Hodge representation. But by \Cref{real-irreducibility}, \(V_\Q\) is
irreducible as a Hodge representation, so it has only a single isotypic component 
as a Hodge structure. Since \(H \subseteq V_\Q\) is irreducible, we must get 
an isomorphism of \(\Q\)-Hodge structures \(V_\Q \simeq H^{\oplus n}\).
\end{proof}

Next we seek to understand the structure of \(V_\C\) by applying 
\Cref{rep-classification}. As in \Cref{mv-rep-classification}, we will see that 
in each of the three cases, the dimensions of \(B\) and \(U\) are determined.
Moreover, due to the previous theorem, we will see that the isotypic decomposition 
of \(V_\C\) actually occurs over a degree two extension of \(\Q\).

\begin{proposition} \label{isotrivial-rep-classification}
    Let \(E\) be the elliptic curve from above, \(H = H^1(E, \Q)\).
    \begin{itemize}
    \item[(a)] \(V_\C\) is of real type if and only if we have an isomorphism
    of \(\Q\)-Hodge representations \(V_\Q = H \otimes W\) for \(W\) a
    \(\Q\)-local system irreducible over \(\C\).
    \item[(b)] Let \(K = \mathrm{End}_{\Q HS}(H)\) be the CM field of \(E\).
    \(V_\C\) is of complex type precisely when \(K \neq \Q\) and there is an 
    isomorphism of \(K\)-Hodge representations  \(V_K \simeq V_K^{1,0} \oplus V_K^{0,1}\).
    Here \(V_K^{1,0}\) and \(V_K^{0,1}\) are non-isomorphic \(K\)-local systems which
    are irreducible over \(\C\).
    \item[(c)] \(V_\C\) is not quaternionic.
    \end{itemize}
\end{proposition}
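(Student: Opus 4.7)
The plan is to dispose of (c) first, then handle (a) and (b) together via a case analysis on the endomorphism field $K = \End_{\Q HS}(H)$.

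For (c), I argue by contradiction. If $V_\C \cong U^{\oplus 2}$ is quaternionic, the identity
\[
\bigwedge^2(U \oplus U) \;\cong\; (\bigwedge^2 U)^{\oplus 3} \oplus \operatorname{Sym}^2 U
\]
combined with $h^0(\bigwedge^2 U) \geq 1$ from the quaternionic case of \Cref{proof-of-rep-classification} gives $h^0(\bigwedge^2 V_\C) \geq 3$. Since $\bigwedge^2 V_\C$ is the complexification of $\bigwedge^2 V_\R$, these global section counts agree, and the proof of \Cref{real-irreducibility} bounds $h^0(\bigwedge^2 V_\R) \leq 1$ via Matsushita's restriction-rank-one theorem; this yields the desired contradiction.

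For (a) and (b), I use \Cref{elliptic-curve-theorem} to identify $V_\Q \cong H^{\oplus n}$ as $\Q$-Hodge structures, whence $\End_{\Q HS}(V_\Q) \cong M_n(K)$ and the monodromy $\rho$ factors through $\GL_n(K) \subset M_n(K)^\times$, commuting with the central $K$-action. The case $K = \Q$ is simplest: the $K$-linear rank-$n$ structure directly produces a $\Q$-tensor decomposition $V_\Q \cong H \otimes W$, and I verify that $W$ must be $\C$-irreducible by a commutant/Schur argument --- any proper $\C$-splitting of $W_\C$ would force $\End_{\pi_1, \R}(V_\R)$ to be a matrix algebra rather than a division algebra, contradicting the irreducibility of $V_\R$ from \Cref{real-irreducibility}.

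When $K$ is CM quadratic, the decomposition $K \otimes_\Q K \cong K \oplus K$ splits $V_K = V_K^{1,0} \oplus V_K^{0,1}$ as $K$-local systems of rank $n$ preserved by $\rho$; the Hodge-type labels are forced by the CM action of $K$ distinguishing $H^{1,0}$ from $H^{0,1}$ after passing to $\C$. If these summands are non-isomorphic as $K$-local systems, complexifying produces $V_\C = U \oplus \overline{U}$ with $U \not\cong \overline{U}$ irreducible (by Schur together with \Cref{real-irreducibility}), yielding complex type as in (b). If instead they are isomorphic, a $\operatorname{Gal}(K/\Q)$-equivariant choice of $K$-isomorphism descends the decomposition to $\Q$ and produces a $\Q$-Hodge representation iso $V_\Q \cong H \otimes W$, placing us in (a). The $\Leftarrow$ directions of (a) and (b) are routine complexifications.

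I expect the main obstacle to be the CM-case descent step: showing that a $K$-linear $\pi_1$-equivariant isomorphism $V_K^{1,0} \cong V_K^{0,1}$ can be chosen compatibly with both the polarization on $V_\Q$ and with $\operatorname{Gal}(K/\Q)$-equivariance, so that it packages into an honest $\Q$-rational tensor factorization. A related subtlety is ruling out, in the non-CM case $K = \Q$, that the center of $\End_{\Q HS, \pi_1}(V_\Q)$ could be a strictly larger CM quadratic extension of $\Q$, which would produce a spurious complex type; here I would invoke the rationality of the local monodromy eigenvalues from \cite[Lemma 3.6]{bak-sch23}.
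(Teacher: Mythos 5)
Your handling of part (c) is essentially the paper's argument, just using the full decomposition $\bigwedge^2(U\oplus U)\cong(\bigwedge^2 U)^{\oplus 3}\oplus\operatorname{Sym}^2 U$ where the paper only needs the containment $\bigwedge^2 U\oplus\bigwedge^2 U\subseteq\bigwedge^2 V_\C$; both contradict $h^0(\bigwedge^2 V_\C)=1$ from global invariant cycles, so this part is fine (modulo writing $V_\C\cong U^{\oplus 2}$, which implicitly pins the multiplicity space at dimension two, but you only need $U^{\oplus 2}\subseteq V_\C$, which the level-one argument gives).

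The gap is in your proof that $W$ (resp. $V_K^{1,0}$) is irreducible over $\C$, and it is fatal to the argument as written. You claim that a proper $\C$-splitting of $W_\C$ ``would force $\End_{\pi_1,\R}(V_\R)$ to be a matrix algebra rather than a division algebra, contradicting the irreducibility of $V_\R$ from \Cref{real-irreducibility}.'' But \Cref{real-irreducibility} asserts that $V_\R$ is irreducible \emph{as a polarizable variation of Hodge structures}, not as an $\R$-local system, so Schur's lemma for local systems does not apply. In fact the very tensor decomposition you construct defeats this: once $V_\Q\cong H\otimes W$ with $\pi_1(B^\circ)$ acting trivially on $H$, you have $V_\R\cong W_\R^{\oplus 2}$ as $\R$-local systems, so $\End_{\pi_1,\R}(V_\R)$ contains $M_2(\R)$ unconditionally and is never a division algebra --- yet $V_\R$ can still be an irreducible VHS because the Hodge filtration on the two copies is twisted. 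The same issue infects your claim that $U\not\cong\overline U$ is irreducible ``by Schur together with \Cref{real-irreducibility}'' in the CM case. What VHS-irreducibility actually gives is that $V_\C$ has a single isotypic factor $B\otimes U$ (or a conjugate pair), together with the irreducibility of $B$ \emph{as a Hodge structure}; this constrains $\dim B$ only after you combine the isotriviality-forced level one on $B$ with the bound $h^0(\bigwedge^2 V_\C)=1$ from global invariant cycles, pinning $\dim B=2$ in the real case and $\dim B=1$ in the complex case. That dimension count is the mechanism the paper uses to force $B=H_\C$ and $W_\C=U$, hence $\C$-irreducibility of $W$; your proposal omits it entirely and substitutes a Schur argument that does not apply. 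The rest of your structure --- the identification $\End_{\Q HS}(V_\Q)\cong M_n(K)$, the factoring of $\rho$ through $\GL_n(K)$, and the $K\otimes_\Q K\cong K\oplus K$ splitting in the CM case --- is a reasonable alternative packaging of the paper's evaluation-map argument, and the descent concerns you flag are real but secondary; the essential missing ingredient is the $h^0(\bigwedge^2 V_\C)$ bound, which must reappear somewhere to close the irreducibility step.
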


\begin{proof}
If \(V_\C\) is real or quaternionic, it must be
of the form \(V_\C = B \otimes U\), where \(U\) is an irreducible \(\C\)-local system.
However, since \(V_\C\) is isotrivial, \(U\) must have level zero. Since 
\(V_\C = V^{1,0} \oplus V^{0,1}\), \(B\) must have level one. In particular, it is 
at least two-dimensional.
We first claim that \(V_\C\) cannot be quaternionic. Indeed, if it were, then
by \Cref{proof-of-rep-classification}, \(h^0(\bigwedge^2 U)\) would contain
an alternating form. Consider the containment
\(\bigwedge^2 U \oplus \bigwedge^2 U \subseteq \bigwedge^2 V_\C\).
We know by Deligne's global invariant cycle theorem that 
\(h^0 \left(\bigwedge^2 V_\C\right) = 1\), so this containment is impossible, a contradiction.

Next, suppose that \(V_\C = B \otimes W_\C\) has real type. We claim
that the dimension of \(B\) is exactly two. Consider once again the containments
\begin{equation}
    \bigwedge^2 V_\C \supseteq (W_\C^{\otimes 2})^{\oplus \binom{\dim B}{2}}
    \supseteq (\text{Sym}^{2}W_\C)^{\oplus \binom{\dim B}{2}}
\end{equation}
Again by the global invariant cycle theorem, we know that the 
\(h^0(\bigwedge^2 V_\C) = 1\). But by \Cref{proof-of-rep-classification},
\(h^0(\text{Sym}^2 W_\C) = 1\). This forces \(\dim B = 2\).

Otherwise, suppose that \(V_\C\) is of complex type, 
\(V_\C = (B \otimes U) \oplus (\overline{B} \otimes \overline{U})\).
Then \(B\) and \(\overline{B}\) both have level zero. But now they must both be 
one-dimensional. If not, any subspace \(C \subseteq B\) is a Hodge subvariation,
with \((C \otimes U) \oplus (\overline{C} \otimes \overline{U}) \subsetneq V_\C\)
defined over \(\R\). This contradicts the irreducibility of \(V_\R\) as a Hodge 
structure. Therefore \(V_\C \simeq U \oplus \overline{U}\).

So far, we have not used the Hodge substructure \(H \subseteq V_\Q\). Consider 
the natural evaluation map 
\begin{equation}
    H \otimes_\Q \Hom_{\Q HS}(H, V_\Q) \to V_\Q.
\end{equation}
This map is always surjective. Suppose that \(E\) is not CM, i.e.
\(\End_{\Q HS}(H) = \Q\). Then by the previous theorem, 
\(\dim H \otimes_\Q \Hom_{\Q HS}(H, H^{\oplus n}) = 2n\), so that 
this map is an isomorphism of \(\Q\)-Hodge structures. Now notice that we may 
equip the domain with a Hodge representation, where \(\pi_1(B^\circ)\) acts 
trivially on \(H\) and \(\gamma . \varphi: h \to \varphi(\gamma h)\) for 
\(h \in H\) and \(\varphi \in \Hom_{\Q HS}(H, V_\Q)\). This extends the evaluation 
map to an isomorphism of Hodge representations. In particular, \(V_\C\) has real 
type.

Now suppose that \(E\) does have CM, \(\End_{\Q HS}(H) = K\) is an imaginary 
quadratic extension of \(\Q\). Since \(H_K = H_K^{1,0} \oplus H_K^{0,1}\) as a 
Hodge structure, the previous theorem gives us an analogous splitting of 
Hodge structures \(V_K = V_K^{1,0} \oplus V_K^{0,1}\). Moreover, Hodge automorphisms 
preserve the Hodge grading, so this gives a splitting of the Hodge representation 
\(V_K = V_K^{1,0} \oplus V_K^{0,1}\) into \(K\)-local systems. If they are 
non-isomorphic, then \(V_\C = V_\C^{1,0} \oplus V_\C^{0,1}\) is a splitting of 
\(V_\C\) into non-isomorphic \(\C\)-local systems. As we showed above, 
this is the complex type case, so that \(V_\C^{1,0}\) and \(V_\C^{0,1}\) are 
irreducible \(\C\)-local systems. 

Alternatively, suppose that \(V_K^{1,0} \simeq V_K^{0,1}\), so that 
\(V_K \simeq B \otimes U\) for \(U\) an irreducible \(K\)-local system.
If \(U = W_K\) for \(W\) a \(\Q\)-local system, then we are again in the real type
case. Thus \(B\) is two-dimensional and level one, so it is again identified 
with the Hodge substructure \(H \subseteq V_\Q\), and \(V_\Q = H \otimes W\).
Finally, suppose \(U\) is an irreducible \(K\)-local system which is not defined
over \(\Q\). Then \(U_\C\) is a \(\C\)-local system which is not defined 
over \(\R\). As we have seen, it cannot be quaternionic, so it must be of 
complex type. But if \(U_\C \neq \overline{U}_\C\), then \(U \neq \overline{U}\).
This contradicts our assumption.
\end{proof}

\begin{remark}
One may have expected that \(V_\C\) has complex type if and only if \(E\) is CM. 
The reason why it is not sufficient for \(E\) to be CM is because the monodromy 
may never actually make use of the CM structure. Indeed, \(V_\C\) has complex type
precisely when there is a local generator \(\gamma \in \pi_1(B^\circ)\) which
acts on \(H\) by a CM automorphism.
\end{remark}

There is an alternative perspective one may take in classifying isotrivial Lagrangian
fibrations. Given $f: X \to B$, one may ask for the intermediate trivialization
$\pi: C^\circ \to B^\circ$, the smallest cover of $B^\circ$ trivializing
$\pi^* R^1 f_*\Q_{X^\circ}$. We may then form the pullback
diagram
\begin{equation*}
\begin{tikzcd}
	Z^\circ \arrow{r} \arrow{d} & X^\circ \arrow{d}{f} \\
	C^\circ \arrow{r}{\pi} & B^\circ
\end{tikzcd}
\end{equation*}

By \cite{kim-laza-martin23}, $C^\circ$ must compactify to a variety which is either
of general type or an Abelian torsor.
In fact, if we assume that the singular fibers of \(f\) are not multiple, 
\cite{kim-laza-martin23} show that the singular fibers of any Lagrangian
fibration follow the Kodaira classification of elliptic surface singularities.

\begin{proposition}[\cite{kim-laza-martin23}, Proposition 4.20] \label{klm-kodaira-type}
    Suppose that \(f: X \to B\) is an isotrivial Lagrangian fibration whose 
    singular fibers are non-multiple. A general singular fiber of \(f\) can be
    of Kodaira type \(\mathrm{II}\), \(\mathrm{III}\), \(\mathrm{IV}\), \(\mathrm{I}_0^*\),
    \(\mathrm{II}^*\), \(\mathrm{III}^*\), or \(\mathrm{IV}^*\).
\end{proposition}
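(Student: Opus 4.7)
The plan is to localize near a general smooth point of the discriminant divisor \(D = B^{sm} \smallsetminus B^\circ\) and invoke the local eigenvalue constraint from Bakker--Schnell to exclude the Kodaira types with infinite-order monodromy.

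First I would pick a general smooth point \(p \in D\) and a transverse disk \(\Delta \hookrightarrow B^{sm}\) meeting \(D\) only at \(p\), so that pulling back \(f\) yields a one-parameter degeneration \(\mathcal{X}_\Delta \to \Delta\) of abelian \(n\)-folds whose central fiber is the general singular fiber of \(f\). By \Cref{elliptic-curve-theorem}, the nearby smooth fibers are isogenous to \(E^n\), and up to isogeny \(\mathcal{X}_\Delta \to \Delta\) factors as the \(n\)-fold self-product of a single elliptic degeneration \(\mathcal{E}_\Delta \to \Delta\). This reduces the classification of the singular fiber to the Kodaira classification applied to \(\mathcal{E}_\Delta\).

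Next I would invoke Lemma 3.6 of Bakker--Schnell to constrain the local monodromy on the \(H = H^1(E, \Q)\) factor. The nontrivial eigenvalues of \(T_p\) lie among the pairs listed in \eqref{eq-possible-eigenvalues}, namely \((\zeta_6, \overline{\zeta}_6)\), \((\zeta_4, \overline{\zeta}_4)\), \((\zeta_3, \overline{\zeta}_3)\), and \((-1,-1)\). In particular, the monodromy around \(p\) is of finite order when restricted to \(H\). In Kodaira's table, the singular fibers with finite-order monodromy are precisely the types \(\text{II}\), \(\text{III}\), \(\text{IV}\), \(\text{I}_0^*\), \(\text{II}^*\), \(\text{III}^*\), \(\text{IV}^*\), each realizing one of the four admissible eigenvalue pairs. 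The types \(\text{I}_n\) (\(n \geq 1\)) and \(\text{I}_n^*\) (\(n \geq 1\)) have a nontrivial unipotent part and are thus excluded, while the non-multiple hypothesis rules out the multiple-fiber variants \(m\text{I}_n\).

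The main obstacle is the reduction in the first paragraph: one has to justify that the isogeny \(X_b \sim E^n\) spreads out in a controlled way over the local base \(\Delta\), so that the singular fiber inherits a well-defined Kodaira label from \(\mathcal{E}_\Delta\). I would approach this by passing to a cyclic base change of \(\Delta\) on which the monodromy becomes trivial on \(H\) (possible because the eigenvalue constraint forces \(T_p\) to have finite order), constructing a common semistable reduction, and applying Poincar\'e reducibility family-wise to produce the elliptic factor \(\mathcal{E}_\Delta\) up to isogeny. Once this framework is in place, the proposition reduces to the finite bookkeeping of matching the eigenvalue pairs to entries in Kodaira's table.
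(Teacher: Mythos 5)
The paper does not prove this proposition; it simply cites \cite[Proposition 4.20]{kim-laza-martin23} and moves on to examples, so there is no in-paper argument to compare against. That said, your blind sketch is worth reviewing on its own merits, and there are two points that need tightening.

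First, the inference ``eigenvalues are roots of unity, hence the local monodromy has finite order'' is not correct as stated. A quasi-unipotent operator whose eigenvalues all lie among the four pairs in \eqref{eq-possible-eigenvalues} together with \(1\) can still carry a nontrivial unipotent Jordan block, and that is exactly what happens for types \(\text{I}_n\) and \(\text{I}_n^*\) with \(n\geq 1\). What kills the unipotent part here is isotriviality itself: the period map is constant, so \(T_p\) acts by Hodge automorphisms of a fixed polarized abelian variety, forcing \(T_p\) to be semisimple of finite order. You should make that reasoning explicit rather than hang the exclusion of infinite-order types on the eigenvalue list alone; the list constrains the semisimple part, while constancy of the period map is what rules out the nilpotent part.

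Second, and more seriously, the ``main obstacle'' you flag is indeed where the actual mathematical content lives, and the machinery you propose (cyclic base change to trivialize monodromy, semistable reduction, fiberwise Poincar\'e reducibility) is not the right technology. \Cref{elliptic-curve-theorem} gives an isogeny \(X_b \sim E^n\) on \emph{smooth} fibers; it does not produce a global splitting of the local family \(\mathcal{X}_\Delta \to \Delta\) up to isogeny, nor does it directly say anything about the structure of the singular fiber itself, which is what a ``Kodaira type'' classifies. In the Lagrangian setting, the Kodaira type of a general singular fiber has to be \emph{defined}, typically via the characteristic one-cycle of the reduced fiber or via Matsushita's local structure theorems, and then matched against the finite-order local monodromy; this is what Hwang--Oguiso and Kim--Laza--Martin carry out. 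What your eigenvalue argument does buy you, cleanly, is that \(T_p\) fixes a corank-two subspace of \(V_\Q\), so up to isogeny the local one-parameter degeneration splits off a constant abelian factor transverse to a rank-two elliptic piece. Making that precise, and showing that the divisorial structure of the singular fiber inherits the Kodaira label of this elliptic piece, is the part you would need to fill in; as written, the base-change and semistable-reduction route does not land there.
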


We exibit some examples of isotrival elliptic K3 surfaces
to show how the classification of \cite{kim-laza-martin23} relates to our own.
We owe these examples in part to \cite{sawon14}.

\begin{example}
Let \(f: X \to \mathbb{P}^1\) be an elliptic K3 surface, and suppose
that \(X\) has singular fibers of Kodaira type II (cuspidal cubic curve). Each one
has Euler characteristic 2, so there are twelve of them; around each of the twelve
punctures \(b_i \in \mathbb{P}^1\), the monodromy matrix can be locally expressed as
\begin{equation}
    T_{b_i} = \begin{pmatrix}
        1 & 1 \\ -1 & 0
    \end{pmatrix}.
\end{equation}
Since the matrix is of order six, we may construct a degree six cover 
\(\pi: C^\circ \to B^\circ\) trivializing \(R^1 f_* \Z_{X^\circ}\).
But \(C^\circ\) then extends to a ramified cover \(\pi: C \to \mathbb{P}^1\).
By Riemann-Hurwitz, 
\begin{equation}
    \chi(C) = 6 \cdot 2 - (6-1) \cdot 12 = -48
\end{equation}
so \(C\) has genus twenty-five. In particular, it is of general type. The monodromy matrix
acts on a general fiber \(E\) by complex multiplication, so the associated
monodromy representation is of complex type.
\end{example}

\begin{example} \label{example-Kummer-real}
Take \(E \times E \to E\) to be projection onto a factor for \(E\) an elliptic curve, 
and quotient by
the involution. After blowing up the sixteen fixed points, we get a Lagrangian 
fibration \(f: X \to \mathbb{P}^1\). Since the map \(E \to \mathbb{P}^1\) has
four branch points, we see that \(f\) has four singular fibers, each of which is
of Kodaira type \(I_0^*\). The associated Dynkin diagram is \(\widetilde{D_4}\),
with fibers consisting of four \(\mathbb{P}^1\) meeting a doubled 
\(\rightQ{E}{\pm 1} = \mathbb{P}^1\)
at the four branch points. The local monodromy matrix over each fiber is given by
\begin{equation}
    T_b = \begin{pmatrix}
        -1 & 0 \\ 0 & -1
    \end{pmatrix}.
\end{equation}
This lattice automorphism is not CM, so the local system is of real type.
The associated covering map is \(E \to \mathbb{P}^1\) of degree 2, where
\(E\) is Abelian.
\end{example}

\begin{example} \label{example-Kummer-complex-1}
Take \(E \times E \to E\) as in \Cref{example-Kummer-real}, but this time we
instead fix \(E = \rightQ{\C}{\left<1, \zeta\right>}\) for \(\zeta\) a primitive
sixth root of unity. Quotienting by the diagonal action of the group
\begin{equation}
    G = \left<\begin{pmatrix}
        \zeta^2 & 0 \\ 0 & \zeta^{-2}
    \end{pmatrix}\right>
\end{equation}
and blowing up the nine fixed points gives us a Lagrangian fibration 
\(f: X \to \mathbb{P}^1\). The three singular fibers above each of the fixed points
of \(\left<\zeta^2\right>\) acting on \(E\) consist of \(E_6\) singularities of Kodaira
type \(\text{IV}^*\). In particular, the local monodromy matrices all have the form
\begin{equation*}
    T_b = \begin{pmatrix}
        -1 & -1 \\ 1 & 0
    \end{pmatrix}.
\end{equation*}
They have order three, so in particular, the local system is of complex type. \(B^\circ\) 
is covered by a degree three map from \(E\), so the fibration is of Abelian type.
\end{example}

\begin{example}
In \Cref{example-Kummer-complex-1}, replace \(E\) by 
\(\rightQ{\C}{\left<1, i\right>}\) and \(G\) by
\(\left<\begin{pmatrix}
    i & 0 \\ 0 & -i
\end{pmatrix}\right>\).
We once again produce a Lagrangian fibration
\begin{equation}
    f: X = \leftQ{E \times E}{G} \longrightarrow 
    \leftQ{E}{\left<i\right>} \simeq \mathbb{P}^1.
\end{equation}
The action of \(G\) on \(E\) has three fixed points, giving three singular fibers.
Two are of Kodaira type \(\text{III}^*\) with Dynkin diagram \(\widetilde{E}_7\), 
while the last is forced to be \(\text{I}_0^*\) as in \Cref{example-Kummer-real}.
The new fibers have local monodromy
\begin{equation}
    T_b = \begin{pmatrix}
        0 & -1 \\ 1 & 0
    \end{pmatrix}
\end{equation}
of order four, so it is CM. Once again, \(\left(\mathbb{P}^1\right)^\circ\) is 
covered and compactified to \(E\), so it has Abelian type.
\end{example}

Notice that while we managed to produce examples of Abelian type fibrations which 
were of either representation type, we do not have an example of a general type 
fibration which is of real type. In fact, we claim that this is the case in higher 
dimensions as well.

\begin{proposition} \label{application-isotrivial-real}
    \(V_\C\) is of complex type if and only if it splits over \(\Q(i)\) or 
    \(\Q(\sqrt{-3})\). If \(V_\C\) has real type, then the intermediate
    trivialization \(C^\circ\) must compactify to an Abelian torsor.
\end{proposition}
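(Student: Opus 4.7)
My plan is to prove the two claims separately. The first follows from Proposition \ref{isotrivial-rep-classification}(b) together with the finiteness of isotrivial monodromy, while the second requires constructing a trivialization of \(K_C\) using the symplectic form and invoking the Kim--Laza--Martin dichotomy.

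For the first claim, Proposition \ref{isotrivial-rep-classification}(b) already identifies complex type with a splitting \(V_K = V_K^{1,0} \oplus V_K^{0,1}\) over the CM field \(K = \End_{\Q HS}(H)\); only the identification of \(K\) with \(\Q(i)\) or \(\Q(\sqrt{-3})\) remains. Since \(f\) is isotrivial, the polarized Hodge isometry group of a fixed smooth fiber is compact, so the global monodromy representation of \(V_\Q\) has finite image. By the remark following Proposition \ref{isotrivial-rep-classification}, \(V_\C\) is of complex type precisely when some local generator of \(\pi_1(B^\circ)\) acts on \(H\) by a non-real element of \(K\); such an element must then be a root of unity in \(K^\times\) other than \(\pm 1\). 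An imaginary quadratic field contains such roots only when it equals \(\Q(i)\) (containing \(\pm i\)) or \(\Q(\sqrt{-3})\) (containing \(\pm \zeta_3, \pm \zeta_6\)), completing the equivalence.

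For the second claim, assume \(V_\C\) is of real type. By \cite{kim-laza-martin23}, \(C^\circ\) compactifies to either a variety of general type or an Abelian torsor, so it suffices to rule out the general type case. Let \(Z^\circ := X^\circ \times_{B^\circ} C^\circ\) denote the pulled-back family. Since \(\pi: C^\circ \to B^\circ\) trivializes \(V_\Q\) by construction and \(f\) is isotrivial, \(Z^\circ\) is a torsor over the trivial abelian fibration \(E^n \times C^\circ \to C^\circ\). Pulling back the symplectic form \(\sigma\) on \(X\) yields a closed non-degenerate 2-form \(\sigma_Z\) on \(Z^\circ\) which vanishes along fiber directions by the Lagrangian condition. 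The mixed component of \(\sigma_Z\) in \(T_{E^n} \otimes T_{C^\circ}\) must therefore be a perfect pairing, and since \(T_{E^n}\) is globally trivial we obtain a trivialization \(T_{C^\circ} \simeq \mathcal{O}_{C^\circ}^n\). Equivalently, \(\sigma_Z^n\) is translation-invariant along the torsor fibers and thus descends to a nowhere-vanishing section of \(K_{C^\circ}\).

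To promote this trivialization across the boundary to \(K_C\), I invoke \Cref{klm-kodaira-type}: in real type, the singular fibers of \(f\) are all of Kodaira type \(\text{I}_0^*\) (the only type with local monodromy \(-I\) on \(H\)), so \(\pi\) is locally a double cover branched along the discriminant of \(f\). A local computation at each branch component should show that \(\sigma_Z^n\) extends to a nowhere-vanishing section across the boundary, yielding \(K_C \equiv 0\). Hence \(C\) is Calabi--Yau, and the Kim--Laza--Martin dichotomy then forces \(C\) to compactify to an Abelian torsor. The main obstacle will be this boundary analysis, which requires combining the local Kodaira model near an \(\text{I}_0^*\) fiber with the ramified structure of \(\pi\) to verify that \(\sigma_Z^n\) acquires no zeros along the branch divisor.
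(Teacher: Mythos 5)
Your first paragraph is essentially the paper's argument for the first assertion, just routed through the structure of the unit group of an imaginary quadratic field rather than through the explicit eigenvalue list in \cite[Lemma 3.6]{bak-sch23}; this works and is equivalent.

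The second paragraph, however, diverges from the paper and leaves a genuine gap. You attempt to construct a trivialization of $K_C$ from scratch: pull back $\sigma$ to the torsor $Z^\circ$, use the Lagrangian/non-degeneracy conditions to trivialize $T_{C^\circ}$, and then argue $\sigma_Z^n$ extends without zeros across the boundary. You yourself flag that the extension across the boundary is ``the main obstacle'' --- and indeed it is the entire content of the claim. As written, this is not a proof but a sketch of how one might re-derive \cite[Proposition 2.24]{kim-laza-martin23}. The paper takes a much shorter route: real type forces all local monodromy to be $-\mathrm{id}$, hence (via \Cref{klm-kodaira-type}) all singular fibers are of Kodaira type $\text{I}_0^*$, and \cite[Proposition 2.24]{kim-laza-martin23} states directly that $C^\circ$ compactifies to an Abelian torsor exactly when the general singular fibers are of type $\text{I}_0^*$, $\text{II}^*$, $\text{III}^*$, or $\text{IV}^*$. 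Since your argument also ends by invoking the Kim--Laza--Martin dichotomy anyway, you gain nothing by not quoting the sharper form of their result; and without completing the local computation at the branch divisor, the step ``$\sigma_Z^n$ extends with no zeros, hence $K_C \equiv 0$'' is unjustified. Concretely: near an $\text{I}_0^*$ fiber the cover $\pi$ is ramified of order two, so $K_C = \pi^* K_B + R$, and whether the zeros of the pulled-back section along $R$ cancel the poles coming from the discriminant is precisely what you would have to verify --- the kind of delicate check that KLM already carried out. You should either carry out that local analysis in full, or replace the whole second paragraph with a direct appeal to \cite[Proposition 2.24]{kim-laza-martin23}.
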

\begin{proof}
    The local monodromy group around any singular fiber is 
    \(\mu_n\) for \(n=2,3,4,6\). If \(V_\C\) is to be of real type, we must have
    \(n=2\), which is attained only when the singular fiber is of Kodaira type 
    \(\text{I}_0^*\). But now the first assertion follows directly from 
    \Cref{isotrivial-rep-classification} and \Cref{klm-kodaira-type}. The second comes
    from \cite[Proposition 2.24]{kim-laza-martin23}, which states that \(C^\circ\)
    compactifies to an Abelian torsor if and only if the general singular fibers of
    \(f\) are of Kodaira type \(\text{I}_0^*\), \(\text{II}^*\), \(\text{III}^*\), or 
    \(\text{IV}^*\), so that all real-type \(V_\C\) are trivialized by an Abelian
    torsor.
\end{proof}

\bibliographystyle{alpha}
\bibliography{refs}

\end{document}